\documentclass[12pt]{article}

\usepackage{amsmath,amsthm,amssymb,amsfonts}
\usepackage{mathrsfs}
\usepackage{enumitem}
\usepackage[margin=2.5cm]{geometry}
\usepackage{graphicx}
\usepackage{tikz}
\usetikzlibrary{arrows.meta,positioning}
\usepackage{hyperref}
\hypersetup{colorlinks=true,linkcolor=blue,citecolor=blue,urlcolor=blue}

\theoremstyle{plain}
\newtheorem{theorem}{Theorem}[section]

\newtheorem{proposition}[theorem]{Proposition}
\newtheorem{corollary}[theorem]{Corollary}

\theoremstyle{definition}
\newtheorem{definition}[theorem]{Definition}
\newtheorem{example}[theorem]{Example}

\theoremstyle{remark}
\newtheorem{remark}[theorem]{Remark}

\newcommand{\aura}{\mathfrak{a}}
\newcommand{\cla}{\operatorname{cl}_{\aura}}
\newcommand{\inta}{\operatorname{int}_{\aura}}
\newcommand{\cl}{\operatorname{cl}}
\newcommand{\inte}{\operatorname{int}}
\newcommand{\taua}{\tau_{\aura}}
\newcommand{\R}{\mathbb{R}}

\newcommand{\powerset}{\mathcal{P}}
\newcommand{\apr}{\underline{\operatorname{apr}}_{\aura}}
\newcommand{\Apr}{\overline{\operatorname{apr}}_{\aura}}

\title{\textbf{Aura Topological Spaces and Generalized Open Sets\\ with Applications to Rough Sets, Sensor Networks,\\ and Epidemic Modelling}}

\author{Ahu A\c{c}{\i}kg\"{o}z\\[6pt]
\small Department of Mathematics, Balikesir University,\\
\small Cagis Campus, 10145, Balikesir, Turkey\\
\small \texttt{ahuacikgoz@balikesir.edu.tr}}

\date{}

\begin{document}

\maketitle

\begin{abstract}
We equip a topological space $(X,\tau)$ with a function $\aura: X \to \tau$ satisfying the single axiom $x \in \aura(x)$. The resulting triple $(X, \tau, \aura)$, which we call an \emph{aura topological space}, provides a point-to-open-set assignment that differs from all existing auxiliary structures in topology---ideals, filters, grills, primals, and the various non-classical frameworks based on fuzzy, soft, or neutrosophic sets. The aura-closure operator $\cla(A) = \{x \in X : \aura(x) \cap A \neq \emptyset\}$ turns out to be an additive \v{C}ech closure operator; it satisfies extensivity, monotonicity, and finite additivity, but idempotency fails in general. Iterating $\cla$ transfinitely yields a Kuratowski closure whose topology $\taua^\infty$ satisfies $\taua^\infty \subseteq \taua \subseteq \tau$, where $\taua$ is the collection of all $\aura$-open sets. We introduce $\aura$-semi-open, $\aura$-pre-open, $\aura$-$\alpha$-open, and $\aura$-$\beta$-open sets, determine the complete hierarchy among these classes and their classical counterparts, and separate all non-coinciding classes by counterexamples on finite spaces as well as on the real line. The corresponding continuity notions and their decompositions are studied. Separation axioms $\aura$-$T_i$ ($i=0,1,2$) are introduced and their dependence on the choice of the scope function is demonstrated. Three applications are developed: (i)~upper and lower approximation operators that generalize Pawlak's rough set model without any equivalence relation, applied to a medical decision-making problem; (ii)~a wireless sensor network coverage model in which full coverage of a target region is characterised by $\aura$-openness; (iii)~an epidemic spread model in which the iterative spread operator tracks multi-step transmission chains and standard interventions (quarantine, social distancing) correspond to modifications of the scope function.
\end{abstract}

\noindent\textbf{Keywords:} Aura topological space; scope function; aura-closure operator; generalized open sets; \v{C}ech closure; separation axioms; rough set; sensor network coverage; epidemic spread model; decision making.

\medskip

\noindent\textbf{2020 Mathematics Subject Classification:} 54A05, 54A10, 54C08, 54D10, 03E72, 94C15, 92D30.

\section{Introduction}

Over the past several decades, a major trend in general topology has been to enrich topological spaces with additional set-theoretic or algebraic data. The ideal topological spaces $(X, \tau, \mathcal{I})$, whose roots go back to Kuratowski \cite{Kuratowski1933} and Vaidyanathaswamy \cite{Vaidyanathaswamy1945}, were brought to the mainstream by Jankovi\'{c} and Hamlett \cite{Jankovic1990}. In this framework an ideal $\mathcal{I}$ (a hereditary, finitely additive family containing $\emptyset$) interacts with the topology through the local function $A^*(\mathcal{I}, \tau)$ and produces a finer topology $\tau^*$. Dual notions followed: Cartan's filters \cite{Cartan1937}, Choquet's grills \cite{Choquet1947} revived by Roy and Mukherjee \cite{Roy2007}, and the recent primals of Acharjee, \"{O}zko\c{c}, and Issaka \cite{Acharjee2022}. The primal concept, in particular, has sparked a wave of activity on operators \cite{AlOmari2023,Alghamdi2024}, separation axioms \cite{AlSaadi2024}, and compatibility conditions \cite{Alqahtani2024}.

Alongside these developments, topology has been blended with fuzzy sets (Zadeh \cite{Zadeh1965}, Chang \cite{Chang1968}), soft sets (Molodtsov \cite{Molodtsov1999}, Shabir and Naz \cite{Shabir2011}), neutrosophic sets (Smarandache \cite{Smarandache1998}, Salama and Alblowi \cite{Salama2012}), and rough sets (Pawlak \cite{Pawlak1982}). If one steps back and looks at the picture as a whole, the existing auxiliary structures can be grouped as follows: (i) subcollections of $\powerset(X)$ (ideals, filters, grills, primals); (ii) second topologies as in Kelly's bitopological spaces \cite{Kelly1963}; (iii) membership-grade functions (fuzzy, intuitionistic fuzzy, neutrosophic); (iv) parametric families (soft sets); (v) equivalence relations (Pawlak rough sets).

The starting point of the present work is the observation that none of these structures captures the idea of assigning to each point a \emph{single fixed open neighborhood}. We propose exactly this: a function $\aura: X \to \tau$ satisfying $x \in \aura(x)$ for all $x \in X$. We call $\aura$ a \emph{scope function} and the triple $(X, \tau, \aura)$ an \textbf{aura topological space}. The name is meant to suggest that each point $x$ carries a ``scope of influence'' $\aura(x)$ around it. We stress that $\aura$ is not a subcollection of $\powerset(X)$, not a second topology, not a membership function, and not a relation; it is a \emph{point-to-open-set assignment}.

We should note that neighborhood assignments $\phi: X \to \tau$ with $x \in \phi(x)$ already appear in the theory of D-spaces (van Douwen and Pfeffer \cite{vanDouwen1979}). There, however, such assignments serve as universally quantified variables---one says ``for every neighborhood assignment $\phi$...''---rather than as a fixed ingredient of the space. Our approach goes in the opposite direction: we fix $\aura$ once and for all and then derive operators, open-set classes, continuity notions, and separation axioms from it.

Part of our motivation comes from outside pure mathematics. In many real-world situations each point of a space carries a natural ``scope'': a wireless sensor has a detection range, a base station has a coverage area, an infected individual can transmit a disease within a certain radius, a pixel in a digital image interacts with its neighbours. All of these are instances of a fixed open-neighbourhood assignment, i.e.\ of a scope function $\aura$. The iterative aura-closure $\cla^n$ then models multi-step propagation (signal relay, epidemic spread, region growing in image segmentation), while the transitivity condition on $\aura$ captures situations where ``what my scope covers is itself fully within my scope.'' To our knowledge, no existing topological framework handles these scenarios in a single, unified structure. Sections~7.2 and~7.3 below give concrete models for wireless coverage and disease transmission that illustrate this point.

The paper is structured as follows. Section~2 collects the preliminary material. Section~3 introduces aura topological spaces and establishes the basic properties of $\cla$ and $\inta$; in particular, $\cla$ is shown to be an additive \v{C}ech closure operator whose iteration $\cla^\infty$ is a Kuratowski closure, and the topology $\taua$ of $\aura$-open sets is proved to satisfy $\taua^\infty \subseteq \taua \subseteq \tau$. In Section~4 we define five generalized open-set classes and construct a complete implication diagram, using counterexamples on both finite spaces and the real line. Section~5 treats $\aura$-continuity and its decompositions. Section~6 introduces $\aura$-$T_0$, $\aura$-$T_1$, and $\aura$-$T_2$ separation. Section~7 presents three applications: to rough set theory (with a medical decision-making example), to wireless sensor network coverage, and to epidemic spread modelling.

\section{Preliminaries}

Throughout this paper, $(X, \tau)$ denotes a topological space. For a subset $A$ of $X$, the closure of $A$ in $(X,\tau)$ is denoted by $\cl(A)$ and the interior by $\inte(A)$. The complement of $A$ is denoted by $A^c = X \setminus A$. The power set of $X$ is denoted by $\powerset(X)$. For $x \in X$, we denote by $\tau(x) = \{U \in \tau : x \in U\}$ the collection of all open neighborhoods of $x$.

\begin{definition}[\cite{Levine1963}]\label{def:semi-open}
A subset $A$ of a topological space $(X,\tau)$ is called \emph{semi-open} if $A \subseteq \cl(\inte(A))$. The complement of a semi-open set is called \emph{semi-closed}. The collection of all semi-open sets is denoted by $SO(X,\tau)$.
\end{definition}

\begin{definition}[\cite{Mashhour1982}]\label{def:pre-open}
A subset $A$ of a topological space $(X,\tau)$ is called \emph{pre-open} if $A \subseteq \inte(\cl(A))$. The complement of a pre-open set is called \emph{pre-closed}. The collection of all pre-open sets is denoted by $PO(X,\tau)$.
\end{definition}

\begin{definition}[\cite{Njastad1965}]\label{def:alpha-open}
A subset $A$ of a topological space $(X,\tau)$ is called \emph{$\alpha$-open} if $A \subseteq \inte(\cl(\inte(A)))$. The collection of all $\alpha$-open sets is denoted by $\alpha O(X,\tau)$.
\end{definition}

\begin{definition}[\cite{AbdElMonsef1983}]\label{def:beta-open}
A subset $A$ of a topological space $(X,\tau)$ is called \emph{$\beta$-open} (or \emph{semi-pre-open}) if $A \subseteq \cl(\inte(\cl(A)))$. The collection of all $\beta$-open sets is denoted by $\beta O(X,\tau)$.
\end{definition}

\begin{definition}[\cite{Cech1966}]
A function $c: \powerset(X) \to \powerset(X)$ is called a \emph{\v{C}ech closure operator} if it satisfies the following conditions for all $A, B \subseteq X$:
\begin{enumerate}[label=(C\arabic*)]
    \item $c(\emptyset) = \emptyset$;
    \item $A \subseteq c(A)$ (extensivity);
    \item $A \subseteq B \implies c(A) \subseteq c(B)$ (monotonicity).
\end{enumerate}
If $c$ additionally satisfies:
\begin{enumerate}[label=(C\arabic*)]
\setcounter{enumi}{3}
    \item $c(A \cup B) = c(A) \cup c(B)$ (finite additivity),
\end{enumerate}
then $c$ is called an \emph{additive \v{C}ech closure operator}. If $c$ further satisfies:
\begin{enumerate}[label=(C\arabic*)]
\setcounter{enumi}{4}
    \item $c(c(A)) = c(A)$ (idempotency),
\end{enumerate}
then $c$ is a \emph{Kuratowski closure operator} and generates a topology on $X$.
\end{definition}

\begin{definition}[\cite{Pawlak1982}]\label{def:rough}
Let $R$ be an equivalence relation on a non-empty set $U$. For each $x \in U$, the equivalence class of $x$ is $[x]_R = \{y \in U : (x,y) \in R\}$. The pair $(U, R)$ is called a \emph{Pawlak approximation space}. For any $A \subseteq U$:
\begin{itemize}
    \item The \emph{lower approximation} is $\underline{\operatorname{apr}}_R(A) = \{x \in U : [x]_R \subseteq A\}$.
    \item The \emph{upper approximation} is $\overline{\operatorname{apr}}_R(A) = \{x \in U : [x]_R \cap A \neq \emptyset\}$.
    \item The \emph{boundary region} is $\operatorname{bnd}_R(A) = \overline{\operatorname{apr}}_R(A) \setminus \underline{\operatorname{apr}}_R(A)$.
\end{itemize}
\end{definition}

\section{Aura Topological Spaces}

In this section, we introduce the central concept of this paper and investigate the fundamental properties of the associated operators.

\begin{definition}\label{def:aura}
Let $(X, \tau)$ be a topological space. A function $\aura: X \to \tau$ is called a \textbf{scope function} (or an \textbf{aura function}) on $(X, \tau)$ if it satisfies:
\begin{equation}\label{eq:aura-axiom}
    x \in \aura(x) \quad \text{for every } x \in X.
\end{equation}
The triple $(X, \tau, \aura)$ is called an \textbf{aura topological space} (briefly, an $\aura$-\textbf{space}).
\end{definition}

\begin{remark}\label{rem:aura-selection}
The scope function $\aura$ selects, for each point $x$, a particular open neighborhood $\aura(x) \in \tau(x)$. One may think of $\aura(x)$ as the ``scope'' of $x$---the part of the space that $x$ can directly interact with. Different choices of $\aura$ on the same topological space $(X,\tau)$ yield different $\aura$-spaces with potentially different properties.
\end{remark}

\begin{example}\label{ex:finite-aura}
Let $X = \{a, b, c, d\}$ and $\tau = \{\emptyset, \{a\}, \{b\}, \{a,b\}, \{a,b,c\}, X\}$.
Define $\aura: X \to \tau$ by
\[
\aura(a) = \{a\}, \quad \aura(b) = \{a,b\}, \quad \aura(c) = \{a,b,c\}, \quad \aura(d) = X.
\]
Then $(X, \tau, \aura)$ is an $\aura$-space since $x \in \aura(x)$ for each $x \in X$.
\end{example}

\begin{example}\label{ex:real-aura}
Let $(\R, \tau_u)$ be the real line with the usual topology. For a fixed $\varepsilon > 0$, define $\aura_\varepsilon: \R \to \tau_u$ by
\[
\aura_\varepsilon(x) = (x - \varepsilon, x + \varepsilon)
\]
for every $x \in \R$. Then $(\R, \tau_u, \aura_\varepsilon)$ is an $\aura$-space. This can be interpreted as assigning each point a fixed ``resolution window'' of radius $\varepsilon$.
\end{example}

\begin{example}\label{ex:variable-aura}
Let $(\R, \tau_u)$ be as above. Define $\aura: \R \to \tau_u$ by
\[
\aura(x) = \left(x - \frac{1}{1+x^2}, \; x + \frac{1}{1+x^2}\right)
\]
for every $x \in \R$. Then $(\R, \tau_u, \aura)$ is an $\aura$-space where the scope narrows as $|x| \to \infty$, modeling a variable-resolution observation.
\end{example}

\subsection{The Aura-Closure Operator}

\begin{definition}\label{def:aura-closure}
Let $(X, \tau, \aura)$ be an $\aura$-space. For $A \subseteq X$, the \textbf{aura-closure} of $A$ is defined by
\begin{equation}\label{eq:aura-closure}
    \cla(A) = \{x \in X : \aura(x) \cap A \neq \emptyset\}.
\end{equation}
\end{definition}

\begin{remark}
The aura-closure of $A$ consists of all points whose aura intersects $A$. In other words, $x \in \cla(A)$ if and only if $A$ meets $\aura(x)$.
\end{remark}

\begin{theorem}\label{thm:cech}
Let $(X, \tau, \aura)$ be an $\aura$-space. The operator $\cla: \powerset(X) \to \powerset(X)$ satisfies the following properties for all $A, B \subseteq X$:
\begin{enumerate}[label=(\alph*)]
    \item $\cla(\emptyset) = \emptyset$.
    \item $A \subseteq \cla(A)$.
    \item $A \subseteq B \implies \cla(A) \subseteq \cla(B)$.
    \item $\cla(A \cup B) = \cla(A) \cup \cla(B)$.
    \item $\cl(A) \subseteq \cla(A)$.
\end{enumerate}
Hence, $\cla$ is an additive \v{C}ech closure operator satisfying the first four Kuratowski axioms.
\end{theorem}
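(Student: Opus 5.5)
Each of the five items follows directly from the defining formula \eqref{eq:aura-closure} together with the single axiom \eqref{eq:aura-axiom}. We verify them one at a time by unwinding the membership condition ``$\aura(x) \cap A \neq \emptyset$''.

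For (a), no set $\aura(x)$ can meet $\emptyset$, so no point lies in $\cla(\emptyset)$.

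For (b), take $x \in A$. By \eqref{eq:aura-axiom}, $x \in \aura(x)$, so $x \in \aura(x) \cap A$ and hence $x \in \cla(A)$.

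For (c), if $A \subseteq B$ then $\aura(x)\cap A \subseteq \aura(x)\cap B$. So whenever the first intersection is nonempty, so is the second.

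For (d), I would use the distributive identity
\[
\aura(x) \cap (A\cup B) = (\aura(x)\cap A) \cup (\aura(x)\cap B).
\]
The left side is nonempty if and only if at least one of the two pieces on the right is nonempty. This gives both inclusions at once. Alternatively, the inclusion $\supseteq$ already follows from (c).

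Item (e) is the only place where the topology is used, so I expect it to be the one step needing any care. Let $x \in \cl(A)$. By the standard neighbourhood characterization of closure, every open set containing $x$ meets $A$. Since $\aura(x) \in \tau$ and $x \in \aura(x)$, the set $\aura(x)$ is such an open neighbourhood. Therefore $\aura(x) \cap A \neq \emptyset$, that is, $x \in \cla(A)$.

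Finally, (a)--(d) are exactly conditions (C1)--(C4) of the \v{C}ech closure definition recalled in Section~2. So $\cla$ is an additive \v{C}ech closure operator. The properties $\cla(\emptyset)=\emptyset$, extensivity and finite additivity are the Kuratowski axioms other than idempotency, which accounts for the final sentence of the statement.
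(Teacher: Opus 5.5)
Your proposal is correct and follows the paper's proof essentially line for line. Each item is checked by unwinding the condition $\aura(x)\cap A\neq\emptyset$: the axiom $x\in\aura(x)$ gives (b), the distributive identity gives (d), and the fact that $\aura(x)$ is an open neighbourhood of $x$ gives (e).
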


\begin{proof}
\begin{enumerate}[label=(\alph*)]
    \item Since $\aura(x) \cap \emptyset = \emptyset$ for every $x \in X$, we have $\cla(\emptyset) = \emptyset$.

    \item Let $x \in A$. By (\ref{eq:aura-axiom}), $x \in \aura(x)$, so $x \in \aura(x) \cap A$. Thus $\aura(x) \cap A \neq \emptyset$, which gives $x \in \cla(A)$.

    \item Let $A \subseteq B$ and $x \in \cla(A)$. Then $\aura(x) \cap A \neq \emptyset$, so there exists $y \in \aura(x) \cap A \subseteq \aura(x) \cap B$. Hence $\aura(x) \cap B \neq \emptyset$ and $x \in \cla(B)$.

    \item For any $x \in X$:
    \begin{align*}
        x \in \cla(A \cup B) &\iff \aura(x) \cap (A \cup B) \neq \emptyset \\
        &\iff (\aura(x) \cap A) \cup (\aura(x) \cap B) \neq \emptyset \\
        &\iff \aura(x) \cap A \neq \emptyset \text{ or } \aura(x) \cap B \neq \emptyset \\
        &\iff x \in \cla(A) \text{ or } x \in \cla(B) \\
        &\iff x \in \cla(A) \cup \cla(B).
    \end{align*}

    \item Let $x \in \cl(A)$. Then every open neighborhood of $x$ intersects $A$. Since $\aura(x)$ is an open neighborhood of $x$, we have $\aura(x) \cap A \neq \emptyset$, so $x \in \cla(A)$. \qedhere
\end{enumerate}
\end{proof}

\begin{theorem}\label{thm:not-idempotent}
The operator $\cla$ is not idempotent in general; that is, there exists an $\aura$-space $(X, \tau, \aura)$ and a subset $A \subseteq X$ such that $\cla(\cla(A)) \neq \cla(A)$.
\end{theorem}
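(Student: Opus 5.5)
The statement is existential, so the plan is to exhibit one explicit counterexample. The natural candidate is the finite space of Example~\ref{ex:finite-aura}, where $X=\{a,b,c,d\}$, $\tau=\{\emptyset,\{a\},\{b\},\{a,b\},\{a,b,c\},X\}$, and $\aura(a)=\{a\}$, $\aura(b)=\{a,b\}$, $\aura(c)=\{a,b,c\}$, $\aura(d)=X$.

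The idea is that the auras in this example form a chain. Applying $\cla$ once adds the points whose aura reaches $A$. Applying it again can add further points whose aura reaches only those newly added points. So the test set $A$ should be a point that lies in few auras. Taking $A=\{d\}$ gives nothing new, since only $\aura(d)$ contains $d$, so $\cla(\{d\})=\{d\}$. Instead I will take $A=\{c\}$.

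First, $c$ lies in $\aura(c)$ and in $\aura(d)$, but not in $\aura(a)$ or $\aura(b)$. Hence
\[
\cla(\{c\})=\{c,d\}.
\]
Second, for $\cla(\{c,d\})$, a point is included exactly when its aura meets $\{c,d\}$. Both $\aura(c)$ and $\aura(d)$ do, while $\aura(a)=\{a\}$ and $\aura(b)=\{a,b\}$ do not. So
\[
\cla(\{c,d\})=\{c,d\}.
\]
This set is stable, so $A=\{c\}$ fails too. The chain structure here makes every first closure an upper set for the relation ``$y\in\aura(x)$'', so this example may never show non-idempotency.

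The main obstacle is therefore the choice of example, not the verification. The fix is to make the relation $y\in\aura(x)$ non-transitive. I will use a new small space: $X=\{a,b,c\}$ with the discrete topology, so every set is open and any $\aura$ with $x\in\aura(x)$ is allowed. Define
\[
\aura(a)=\{a,b\},\qquad \aura(b)=\{b,c\},\qquad \aura(c)=\{c\}.
\]
Take $A=\{c\}$. Then $\cla(\{c\})=\{b,c\}$, because $\aura(b)$ and $\aura(c)$ meet $\{c\}$ but $\aura(a)$ does not. Next, $\cla(\{b,c\})=\{a,b,c\}$, because now $\aura(a)$ also meets $\{b,c\}$. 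Since $\{a,b,c\}\neq\{b,c\}$, we get $\cla(\cla(A))\neq\cla(A)$, which proves the theorem.

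If a non-discrete example is preferred, the same pattern carries over to $(\R,\tau_u,\aura_\varepsilon)$ from Example~\ref{ex:real-aura}. There $\cla(\{0\})=(-\varepsilon,\varepsilon)$, while $\cla^2(\{0\})=(-2\varepsilon,2\varepsilon)$. Only the finite discrete computation is needed for the proof, though.
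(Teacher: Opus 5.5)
Your final counterexample (the discrete topology on $\{a,b,c\}$ with $\aura(a)=\{a,b\}$, $\aura(b)=\{b,c\}$, $\aura(c)=\{c\}$ and $A=\{c\}$) is exactly the one the paper uses, and your computations $\cla(\{c\})=\{b,c\}$ and $\cla(\{b,c\})=X$ are correct. You can also pin down why your first attempt was bound to fail: the aura function of Example~\ref{ex:finite-aura} is transitive, so $\cla$ is idempotent there by Proposition~\ref{prop:transitive-base}.
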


\begin{proof}
Let $X = \{a, b, c\}$, $\tau = \powerset(X)$ (discrete topology), and define
\[
\aura(a) = \{a, b\}, \quad \aura(b) = \{b, c\}, \quad \aura(c) = \{c\}.
\]
Let $A = \{c\}$. Then:
\[
\cla(A) = \{x \in X : \aura(x) \cap \{c\} \neq \emptyset\} = \{b, c\},
\]
since $\aura(b) \cap \{c\} = \{c\} \neq \emptyset$ and $\aura(c) \cap \{c\} = \{c\} \neq \emptyset$, but $\aura(a) \cap \{c\} = \emptyset$. Now:
\[
\cla(\cla(A)) = \cla(\{b, c\}) = \{x \in X : \aura(x) \cap \{b, c\} \neq \emptyset\} = \{a, b, c\} = X,
\]
since $\aura(a) \cap \{b, c\} = \{b\} \neq \emptyset$. Therefore $\cla(A) = \{b,c\} \neq X = \cla(\cla(A))$.
\end{proof}

\begin{theorem}\label{thm:cl-vs-cla}
Let $(X, \tau, \aura)$ be an $\aura$-space and $A \subseteq X$. Then:
\begin{enumerate}[label=(\alph*)]
    \item $\cl(A) \subseteq \cla(A)$.
    \item The inclusion can be strict.
    \item $\cla(A) \subseteq \bigcup_{x \in A} \aura(x)$ need not hold in general.
\end{enumerate}
\end{theorem}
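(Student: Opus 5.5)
Part (a) is already Theorem~\ref{thm:cech}(e), so I will simply cite it. The argument there is short: every open neighbourhood of a point of $\cl(A)$ meets $A$, and $\aura(x)$ is such a neighbourhood.

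For (b) I will give an explicit example, and the plan is to make $\tau$ coarse and $\aura(x)$ large. Take $X=\{a,b\}$ with the discrete topology, $\aura(a)=X$, $\aura(b)=\{b\}$, and $A=\{b\}$. Then $\cl(A)=\{b\}$ because $\{b\}$ is closed. On the other hand $\aura(a)\cap A\neq\emptyset$, so $\cla(A)=X$. Hence $\cl(A)\subsetneq\cla(A)$.

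I could also use the real line from Example~\ref{ex:real-aura}. For $A=\{0\}$ we have $\cl(A)=\{0\}$ but $\cla(A)=(-\varepsilon,\varepsilon)$. This shows that strictness is typical rather than a finite-space artifact.

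For (c) I must find $A$ and a point $x\in\cla(A)$ such that $x\notin\aura(y)$ for every $y\in A$. The idea is that $\aura$ need not be symmetric: $x$ can see $A$ without any point of $A$ seeing $x$. In the same two-point example with $A=\{b\}$, we get $\bigcup_{y\in A}\aura(y)=\{b\}$, while $\cla(A)=X\ni a$, so the inclusion fails.

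The only step needing care is verifying the definitions in the counterexample. I must check that each $\aura(x)$ is open and contains $x$, and compute $\cla$ directly from Definition~\ref{def:aura-closure}. There is no real obstacle here; the point to stress is that the failure in (c) comes from the asymmetry of the relation $y\in\aura(x)$.
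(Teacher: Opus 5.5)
Your proof is correct and follows the paper's route: (a) is cited from Theorem~\ref{thm:cech}(e), and (b) and (c) are settled together by one discrete-topology example with an asymmetric aura and $A=\{b\}$. The paper uses $X=\{a,b,c\}$ with $\aura(a)=\{a,b\}$, $\aura(b)=\{b\}$, $\aura(c)=\{c\}$, which is your two-point example plus an irrelevant third point; note only that your stated heuristic should be to make $\tau$ \emph{fine} so that $\cl(A)$ stays small, not coarse, which is in fact what your discrete example does.
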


\begin{proof}
Part (a) is proven in Theorem \ref{thm:cech}(e). For part (b), consider $X = \{a,b,c\}$, $\tau = \{\emptyset, \{a\}, \{b,c\}, X\}$, and $\aura(a) = \{a\}$, $\aura(b) = \{b,c\}$, $\aura(c) = \{b,c\}$. Let $A = \{b\}$. Then $\cl(A) = \{b,c\}$ and $\cla(A) = \{b, c\}$ since $\aura(b) \cap \{b\} = \{b\} \neq \emptyset$ and $\aura(c) \cap \{b\} = \{b\} \neq \emptyset$, but $\aura(a) \cap \{b\} = \emptyset$. Here the two closures coincide. Instead, take $X = \{a,b,c\}$, $\tau = \powerset(X)$, $\aura(a) = \{a,b\}$, $\aura(b) = \{b\}$, $\aura(c) = \{c\}$, and $A = \{b\}$. Then $\cl(A) = \{b\}$ (discrete topology) but $\cla(A) = \{a, b\}$ since $\aura(a) \cap \{b\} = \{b\} \neq \emptyset$. The inclusion is strict.

For part (c), with the same space, $\bigcup_{x \in A} \aura(x) = \aura(b) = \{b\}$, but $\cla(A) = \{a,b\} \not\subseteq \{b\}$.
\end{proof}

\subsection{The Aura-Interior Operator}

\begin{definition}\label{def:aura-interior}
Let $(X, \tau, \aura)$ be an $\aura$-space. For $A \subseteq X$, the \textbf{aura-interior} of $A$ is defined by
\begin{equation}\label{eq:aura-interior}
    \inta(A) = \{x \in A : \aura(x) \subseteq A\}.
\end{equation}
\end{definition}

\begin{theorem}\label{thm:int-properties}
Let $(X, \tau, \aura)$ be an $\aura$-space. The operator $\inta: \powerset(X) \to \powerset(X)$ satisfies the following for all $A, B \subseteq X$:
\begin{enumerate}[label=(\alph*)]
    \item $\inta(X) = X$.
    \item $\inta(A) \subseteq A$.
    \item $A \subseteq B \implies \inta(A) \subseteq \inta(B)$.
    \item $\inta(A \cap B) = \inta(A) \cap \inta(B)$.
    \item $\inta(A) \subseteq \inte(A)$.
    \item $\inta(A) = A \setminus \cla(A^c)$.
\end{enumerate}
\end{theorem}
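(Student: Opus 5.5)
Each part follows directly from the defining formula $\inta(A) = \{x \in A : \aura(x) \subseteq A\}$, the axiom (\ref{eq:aura-axiom}), and, for (f), the definition of $\cla$. I would prove (a)--(e) elementwise and then derive (f) by a chain of equivalences. This mirrors the proof of Theorem~\ref{thm:cech}.

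For (a), every $x \in X$ satisfies $\aura(x) \subseteq X$, so $x \in \inta(X)$. Part (b) holds by definition, since $\inta(A)$ is defined as a subset of $A$. For (c), suppose $A \subseteq B$ and $x \in \inta(A)$. Then $x \in A \subseteq B$ and $\aura(x) \subseteq A \subseteq B$, so $x \in \inta(B)$. For (d), the inclusion $\subseteq$ follows from (c), because $A \cap B$ is contained in both $A$ and $B$. Conversely, if $x \in \inta(A) \cap \inta(B)$, then $x \in A \cap B$, and $\aura(x) \subseteq A$ together with $\aura(x) \subseteq B$ gives $\aura(x) \subseteq A \cap B$.

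For (e), take $x \in \inta(A)$. Then $\aura(x)$ is an element of $\tau$ containing $x$ by (\ref{eq:aura-axiom}), and it is contained in $A$. So $x$ has an open neighbourhood inside $A$, which gives $x \in \inte(A)$. This is the only place where the hypothesis $\aura(x) \in \tau$ is used. It is the interior analogue of Theorem~\ref{thm:cech}(e), and (f) will let us read it off from that result as a cross-check.

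For (f), we have $x \notin \cla(A^c)$ if and only if $\aura(x) \cap A^c = \emptyset$, which holds if and only if $\aura(x) \subseteq A$. Intersecting with the condition $x \in A$ gives $A \setminus \cla(A^c) = \{x \in A : \aura(x) \subseteq A\} = \inta(A)$.

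It is also worth observing that the condition $x \in A$ in the definition is redundant, since $\aura(x) \subseteq A$ already forces $x \in A$. Hence $\inta(A) = X \setminus \cla(A^c)$, which is the usual duality.

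No step is a real obstacle. The only point needing care is (d): the reverse inclusion must be checked directly, because monotonicity (c) only yields one direction.
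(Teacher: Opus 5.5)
Your proof is correct and follows the paper's argument essentially step for step: you verify (a)--(e) elementwise, with (e) resting on $\aura(x)$ being an open set containing $x$ inside $A$, and you prove (f) by the chain $x \notin \cla(A^c) \iff \aura(x)\cap A^c=\emptyset \iff \aura(x)\subseteq A$. Your additional remarks are valid minor refinements: getting one half of (d) from (c), noting that the condition $x\in A$ is redundant so that $\inta(A)=X\setminus\cla(A^c)$, and recovering (e) from (f) together with Theorem~\ref{thm:cech}(e).
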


\begin{proof}
\begin{enumerate}[label=(\alph*)]
    \item For every $x \in X$, $\aura(x) \subseteq X$, so $x \in \inta(X)$.

    \item By definition, $\inta(A) = \{x \in A : \aura(x) \subseteq A\} \subseteq A$.

    \item Let $x \in \inta(A)$. Then $x \in A$ and $\aura(x) \subseteq A \subseteq B$. Since $x \in A \subseteq B$, we have $x \in B$ and $\aura(x) \subseteq B$, so $x \in \inta(B)$.

    \item Let $x \in \inta(A \cap B)$. Then $x \in A \cap B$ and $\aura(x) \subseteq A \cap B$. Hence $x \in A$, $\aura(x) \subseteq A$ and $x \in B$, $\aura(x) \subseteq B$. So $x \in \inta(A) \cap \inta(B)$. Conversely, if $x \in \inta(A) \cap \inta(B)$, then $x \in A \cap B$ and $\aura(x) \subseteq A \cap B$, giving $x \in \inta(A \cap B)$.

    \item Let $x \in \inta(A)$. Then $x \in A$ and $\aura(x) \subseteq A$. Since $\aura(x)$ is an open set containing $x$ and contained in $A$, we have $x \in \inte(A)$.

    \item We have:
    \begin{align*}
        x \in \inta(A) &\iff x \in A \text{ and } \aura(x) \subseteq A \\
        &\iff x \in A \text{ and } \aura(x) \cap A^c = \emptyset \\
        &\iff x \in A \text{ and } x \notin \cla(A^c) \\
        &\iff x \in A \setminus \cla(A^c). \qedhere
    \end{align*}
\end{enumerate}
\end{proof}

\subsection{The Aura Topology}

\begin{definition}\label{def:aura-open}
Let $(X, \tau, \aura)$ be an $\aura$-space. A subset $A$ of $X$ is called \textbf{$\aura$-open} if for every $x \in A$, $\aura(x) \subseteq A$. Equivalently, $A$ is $\aura$-open if and only if $\inta(A) = A$. The collection of all $\aura$-open sets is denoted by $\taua$.
\end{definition}

\begin{theorem}\label{thm:aura-topology}
Let $(X, \tau, \aura)$ be an $\aura$-space. Then:
\begin{enumerate}[label=(\alph*)]
    \item $\taua$ is a topology on $X$.
    \item $\taua \subseteq \tau$.
\end{enumerate}
\end{theorem}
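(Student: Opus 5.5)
The plan is to check the three topology axioms for $\taua$ straight from Definition~\ref{def:aura-open}, and then prove (b) by writing each $\aura$-open set as a union of members of $\tau$.

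For (a), we verify the axioms in turn.

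First, $\emptyset\in\taua$ holds vacuously, since there is no $x\in\emptyset$ to check. Also $X\in\taua$, because $\aura(x)\subseteq X$ for every $x$; equivalently, $\inta(X)=X$ by Theorem~\ref{thm:int-properties}(a).

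Next, for arbitrary unions, let $\{A_i\}_{i\in I}\subseteq\taua$ and let $x\in\bigcup_i A_i$. Then $x\in A_j$ for some $j$. Hence $\aura(x)\subseteq A_j\subseteq\bigcup_i A_i$.

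For finite intersections, let $A,B\in\taua$ and $x\in A\cap B$. Then $\aura(x)\subseteq A$ and $\aura(x)\subseteq B$, so $\aura(x)\subseteq A\cap B$. Alternatively, this follows from $\inta(A\cap B)=\inta(A)\cap\inta(B)=A\cap B$, using Theorem~\ref{thm:int-properties}(d).

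In fact the union and intersection arguments work for arbitrary families, so $\taua$ is even an Alexandrov topology. I will state this only as an aside, if at all.

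For (b), let $A\in\taua$. For each $x\in A$ the axiom \eqref{eq:aura-axiom} gives $x\in\aura(x)$, and $\aura$-openness gives $\aura(x)\subseteq A$. Therefore
\[
A=\bigcup_{x\in A}\aura(x).
\]
Each $\aura(x)$ lies in $\tau$, so this is a union of $\tau$-open sets and hence $A\in\tau$. Another route is Theorem~\ref{thm:int-properties}(e) together with (b) of that theorem: $A=\inta(A)\subseteq\inte(A)\subseteq A$, so $A=\inte(A)$ is open.

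I do not expect a genuine obstacle here. The only point needing care is that (b) uses the fact that the codomain of $\aura$ is $\tau$; without that, the union $\bigcup_{x\in A}\aura(x)$ would have no reason to be $\tau$-open.
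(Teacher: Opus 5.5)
Your proof is correct and follows the paper's argument essentially step for step. You verify the topology axioms directly from the definition of $\aura$-openness, and for (b) you use the same two routes, $A=\bigcup_{x\in A}\aura(x)$ and $A=\inta(A)\subseteq\inte(A)\subseteq A$. Your aside that the intersection argument works for arbitrary families, so that $\taua$ is an Alexandrov topology, is also correct, although the paper does not mention it.
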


\begin{proof}
\begin{enumerate}[label=(\alph*)]
    \item We verify the topology axioms:
    \begin{itemize}
        \item $\emptyset \in \taua$: vacuously true, since there is no $x \in \emptyset$.
        \item $X \in \taua$: for every $x \in X$, $\aura(x) \subseteq X$.
        \item \emph{Finite intersection}: Let $A, B \in \taua$ and $x \in A \cap B$. Then $\aura(x) \subseteq A$ and $\aura(x) \subseteq B$, so $\aura(x) \subseteq A \cap B$.
        \item \emph{Arbitrary union}: Let $\{A_i\}_{i \in I} \subseteq \taua$ and $x \in \bigcup_{i \in I} A_i$. Then $x \in A_j$ for some $j \in I$, so $\aura(x) \subseteq A_j \subseteq \bigcup_{i \in I} A_i$.
    \end{itemize}

    \item Let $A \in \taua$. For every $x \in A$, $\aura(x) \subseteq A$ and $\aura(x) \in \tau$ with $x \in \aura(x)$. Thus $A = \bigcup_{x \in A} \aura(x) \in \tau$, since arbitrary unions of open sets are open. Alternatively, by Theorem \ref{thm:int-properties}(e), $A = \inta(A) \subseteq \inte(A) \subseteq A$, so $A = \inte(A) \in \tau$. \qedhere
\end{enumerate}
\end{proof}

\begin{example}\label{ex:aura-topology-finite}
Consider the $\aura$-space from Example \ref{ex:finite-aura}. The $\aura$-open sets are:
\begin{itemize}
    \item $\emptyset$: trivially $\aura$-open.
    \item $\{a\}$: $\aura(a) = \{a\} \subseteq \{a\}$. Yes.
    \item $\{a,b\}$: $\aura(a) = \{a\} \subseteq \{a,b\}$ and $\aura(b) = \{a,b\} \subseteq \{a,b\}$. Yes.
    \item $\{a,b,c\}$: $\aura(a) = \{a\} \subseteq \{a,b,c\}$, $\aura(b) = \{a,b\} \subseteq \{a,b,c\}$, and $\aura(c) = \{a,b,c\} \subseteq \{a,b,c\}$. Yes.
    \item $X$: Yes.
    \item $\{b\}$: $\aura(b) = \{a,b\} \not\subseteq \{b\}$. No.
    \item $\{c\}$: $\aura(c) = \{a,b,c\} \not\subseteq \{c\}$. No.
    \item $\{d\}$: $\aura(d) = X \not\subseteq \{d\}$. No.
\end{itemize}
Hence $\taua = \{\emptyset, \{a\}, \{a,b\}, \{a,b,c\}, X\}$. Note that $\{b\} \in \tau$ but $\{b\} \notin \taua$, confirming that $\taua \subsetneq \tau$ in general.
\end{example}

\begin{proposition}\label{prop:aura-base}
Let $(X, \tau, \aura)$ be an $\aura$-space. The family $\mathcal{B}_\aura = \{\aura(x) : x \in X\}$ is a subbase for $\taua$. More precisely, if $A \in \taua$, then $A = \bigcup_{x \in A} \aura(x)$, so $\mathcal{B}_\aura$ is a cover-base for $\taua$.
\end{proposition}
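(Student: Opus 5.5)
The plan is to prove the ``more precisely'' part directly. Fix $A \in \taua$ and compare $A$ with $\bigcup_{x \in A} \aura(x)$.
\begin{itemize}
\item The inclusion $A \subseteq \bigcup_{x\in A}\aura(x)$ follows from the axiom (\ref{eq:aura-axiom}), since $x \in \aura(x)$ for each $x \in A$.
\item The reverse inclusion is exactly Definition~\ref{def:aura-open}: $\aura(x) \subseteq A$ for every $x \in A$.
\end{itemize}
This is the same computation as in the proof of Theorem~\ref{thm:aura-topology}(b), so every $\aura$-open set is a union of members of $\mathcal{B}_\aura$. In particular $\mathcal{B}_\aura$ covers $X$, because $X \in \taua$ and hence $X = \bigcup_{x\in X}\aura(x)$. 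This is the sense of ``cover-base'' I would adopt: a family whose unions include every member of $\taua$.

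The main obstacle is the word ``subbase'' taken literally. A subbase for $\taua$ must consist of $\taua$-open sets, so one would need $\aura(x) \in \taua$ for every $x$. That fails in general. In the space of Theorem~\ref{thm:not-idempotent} we have $\aura(a)=\{a,b\}$, and $\aura(b)=\{b,c\}\not\subseteq\{a,b\}$, so $\aura(a)\notin\taua$. Also, the topology generated by $\mathcal{B}_\aura$ there is the discrete topology, whereas $\taua=\{\emptyset,\{c\},\{b,c\},X\}$.

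I would therefore state explicitly what is true:
\begin{enumerate}
\item[(i)] $\taua \subseteq \{\bigcup \mathcal{C} : \mathcal{C}\subseteq \mathcal{B}_\aura\}$, by the argument above.
\item[(ii)] $\mathcal{B}_\aura$ is a genuine base (hence subbase) for $\taua$ if and only if $\mathcal{B}_\aura \subseteq \taua$.
\end{enumerate}
Condition (ii) holds if and only if $\aura$ is transitive, meaning that $y \in \aura(x)$ implies $\aura(y)\subseteq\aura(x)$.

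For (ii), necessity is immediate from the definition of $\aura$-openness applied to $\aura(x)$. For sufficiency, assume $\mathcal{B}_\aura\subseteq\taua$ and use (i): every member of $\taua$ is a union of members of $\mathcal{B}_\aura$, and conversely every such union lies in $\taua$ because $\taua$ is closed under unions (Theorem~\ref{thm:aura-topology}(a)). So $\mathcal{B}_\aura$ is a base for $\taua$. It is in fact the minimal one, since $\aura(x)$ is then the smallest $\taua$-open set containing $x$.

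I expect no difficulty beyond flagging this caveat. The substantive content of the proposition is the union representation, and that follows at once from the two displayed inclusions.
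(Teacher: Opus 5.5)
Your proof of the union representation is the paper's argument: for $x\in A$, $x\in\aura(x)\subseteq A$, so $A\subseteq\bigcup_{x\in A}\aura(x)\subseteq A$. Your caveat about the word ``subbase'' is correct and goes further than the paper. The paper's proof never justifies that word; it only notes that unions of members of $\mathcal{B}_\aura$ need not be $\aura$-open and falls back on ``cover-base''. Your example from Theorem~\ref{thm:not-idempotent} shows the subbase claim is false in general, since there $\aura(a)=\{a,b\}\notin\taua=\{\emptyset,\{c\},\{b,c\},X\}$. Your item (ii) also sharpens Proposition~\ref{prop:transitive-base} into an equivalence: $\mathcal{B}_\aura$ is a subbase for $\taua$ iff it is a base iff $\mathcal{B}_\aura\subseteq\taua$ iff $\aura$ is transitive.

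One side remark is wrong: the topology generated by $\{\{a,b\},\{b,c\},\{c\}\}$ is not discrete. Finite intersections add only $\{b\}$ and $\emptyset$, so the generated topology is $\{\emptyset,\{b\},\{c\},\{a,b\},\{b,c\},X\}$, which omits $\{a\}$ and $\{a,c\}$. It still strictly contains $\taua$, so your conclusion is unaffected. In general, $\taua$ is contained in the topology generated by $\mathcal{B}_\aura$, which is contained in $\tau$. The first inclusion is an equality exactly when $\aura$ is transitive.
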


\begin{proof}
Let $A \in \taua$. For each $x \in A$, $x \in \aura(x) \subseteq A$, so $A \subseteq \bigcup_{x \in A} \aura(x) \subseteq A$. Conversely, any union of elements of $\mathcal{B}_\aura$ is $\aura$-open: if $x \in \bigcup_{i \in I} \aura(x_i)$, then $x \in \aura(x_j)$ for some $j$, and we need $\aura(x) \subseteq \bigcup_{i \in I} \aura(x_i)$. This holds if and only if for every $y \in \aura(x)$, $y \in \aura(x_k)$ for some $k$. This need not be automatic, so $\mathcal{B}_\aura$ serves as a cover-base rather than a base in general.
\end{proof}

\begin{definition}\label{def:transitive}
An $\aura$-space $(X, \tau, \aura)$ is called \textbf{transitive} if for every $x \in X$ and every $y \in \aura(x)$, $\aura(y) \subseteq \aura(x)$.
\end{definition}

\begin{proposition}\label{prop:transitive-base}
If $(X, \tau, \aura)$ is a transitive $\aura$-space, then $\mathcal{B}_\aura = \{\aura(x) : x \in X\}$ is a base for $\taua$, and the aura-closure $\cla$ is idempotent.
\end{proposition}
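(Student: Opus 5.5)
The proof has two independent parts, and both rest on one consequence of transitivity. If $y \in \aura(x)$, then $\aura(y) \subseteq \aura(x)$. Hence every set $\aura(x)$ is itself $\aura$-open in the sense of Definition~\ref{def:aura-open}, so $\mathcal{B}_\aura \subseteq \taua$.

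For the base claim, I will combine this observation with Proposition~\ref{prop:aura-base}. That result already shows every $A \in \taua$ equals $\bigcup_{x \in A} \aura(x)$. Since each $\aura(x)$ now lies in $\taua$, $\mathcal{B}_\aura$ is a subfamily of $\taua$ whose unions give every member of $\taua$. This is exactly the definition of a base. It also settles the gap flagged in the proof of Proposition~\ref{prop:aura-base}: arbitrary unions of the $\aura(x_i)$ are $\aura$-open because $\taua$ is a topology (Theorem~\ref{thm:aura-topology}). If a base in the intersection sense is wanted, note that for $z \in \aura(x)\cap\aura(y)$ transitivity gives $\aura(z)\subseteq \aura(x)\cap\aura(y)$.

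For idempotency, extensivity (Theorem~\ref{thm:cech}(b)) gives $\cla(A) \subseteq \cla(\cla(A))$, so only the reverse inclusion is needed. Let $x \in \cla(\cla(A))$. Then there is some $y \in \aura(x) \cap \cla(A)$, and so $\aura(y) \cap A \neq \emptyset$. Transitivity gives $\aura(y) \subseteq \aura(x)$, hence $\aura(x) \cap A \supseteq \aura(y)\cap A \neq \emptyset$, and therefore $x \in \cla(A)$.

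No step is a serious obstacle. The only point needing care is the meaning of ``base'' for $\taua$. I will state that a base means a subfamily of $\taua$ whose unions give all of $\taua$, and verify membership $\mathcal{B}_\aura\subseteq\taua$ explicitly, since that is precisely what fails without transitivity.
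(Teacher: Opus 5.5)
Your proof is correct and follows essentially the same route as the paper. Transitivity makes each $\aura(x)$ $\aura$-open, every $\aura$-open set is the union of the auras of its points, and idempotency follows from $\aura(y)\subseteq\aura(x)$ for $y\in\aura(x)\cap\cla(A)$; you also state explicitly the trivial inclusion $\cla(A)\subseteq\cla(\cla(A))$ via extensivity, which the paper leaves implicit.
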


\begin{proof}
Let $y \in \aura(x)$. By transitivity, $\aura(y) \subseteq \aura(x)$, so $\aura(x)$ is $\aura$-open, i.e., $\aura(x) \in \taua$ for every $x \in X$. Since every $\aura$-open set is a union of such sets, $\mathcal{B}_\aura$ is a base for $\taua$.

For idempotency, let $x \in \cla(\cla(A))$. Then there exists $y \in \aura(x) \cap \cla(A)$, meaning there exists $z \in \aura(y) \cap A$. By transitivity, $\aura(y) \subseteq \aura(x)$, so $z \in \aura(x) \cap A$, giving $x \in \cla(A)$.
\end{proof}

\subsection{Iterative Aura-Closure}

Since $\cla$ is not idempotent in general, we define the iterative closures.

\begin{definition}\label{def:iterative}
Let $(X, \tau, \aura)$ be an $\aura$-space and $A \subseteq X$. Define:
\begin{align}
    \cla^0(A) &= A, \\
    \cla^{n+1}(A) &= \cla(\cla^n(A)) \quad \text{for } n \geq 0, \\
    \cla^\infty(A) &= \bigcup_{n=0}^{\infty} \cla^n(A).
\end{align}
\end{definition}

\begin{theorem}\label{thm:chain}
Let $(X, \tau, \aura)$ be an $\aura$-space and $A \subseteq X$. Then:
\begin{enumerate}[label=(\alph*)]
    \item $A \subseteq \cla(A) \subseteq \cla^2(A) \subseteq \cdots \subseteq \cla^\infty(A)$.
    \item $\cla^\infty$ is a Kuratowski closure operator.
    \item The topology generated by $\cla^\infty$ satisfies $\tau_\aura^\infty \subseteq \taua \subseteq \tau$.
\end{enumerate}
\end{theorem}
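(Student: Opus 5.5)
The plan is to exploit one structural fact: $\cla$ is \emph{completely} additive, not merely finitely additive. For any family $\{A_i\}_{i\in I}$ we have $x \in \cla(\bigcup_i A_i)$ iff $\aura(x)$ meets some $A_i$ iff $x \in \bigcup_i \cla(A_i)$. This is the same pointwise computation as in Theorem~\ref{thm:cech}(d), with an arbitrary index set. I will state this first as a lemma, since everything else reduces to it.

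For (a), the chain $\cla^n(A) \subseteq \cla^{n+1}(A)$ follows from extensivity, Theorem~\ref{thm:cech}(b), applied to $\cla^n(A)$. Each term lies in the union $\cla^\infty(A)$ by definition.

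For (b), I verify the Kuratowski axioms for $\cla^\infty$ one at a time.
\begin{itemize}
\item $\cla^\infty(\emptyset)=\emptyset$ follows by induction from Theorem~\ref{thm:cech}(a).
\item Extensivity holds since $A=\cla^0(A)$.
\item Monotonicity follows by induction on $n$ from Theorem~\ref{thm:cech}(c).
\item Finite additivity: induction with Theorem~\ref{thm:cech}(d) gives $\cla^n(A\cup B)=\cla^n(A)\cup\cla^n(B)$, and then I take unions over $n$.
\item Idempotency is the main point. By complete additivity,
\[
\cla(\cla^\infty(A))=\bigcup_{n\ge 0}\cla^{n+1}(A)\subseteq\cla^\infty(A).
\]
Combined with extensivity, $\cla(B)=B$ for $B=\cla^\infty(A)$. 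Induction then gives $\cla^n(B)=B$ for all $n$, hence $\cla^\infty(B)=B$.
\end{itemize}
This is the only step where something could go wrong, because a countable union of iterates need not be closed for a general \v{C}ech operator, which is why the abstract speaks of transfinite iteration. Here complete additivity means the process stabilizes at stage $\omega$, so no transfinite argument is needed.

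For (c), I first characterize the $\cla^\infty$-closed sets: $\cla^\infty(B)=B$ iff $\cla(B)=B$. The forward direction holds because $B\subseteq\cla(B)\subseteq\cla^\infty(B)=B$. The converse holds by the induction just used. Now let $U\in\taua^\infty$, so $B=U^c$ satisfies $\cla(U^c)\subseteq U^c$. Then for every $x\in U$ we have $x\notin\cla(U^c)$, i.e.\ $\aura(x)\cap U^c=\emptyset$, i.e.\ $\aura(x)\subseteq U$. Thus $U$ is $\aura$-open in the sense of Definition~\ref{def:aura-open}, and $\taua^\infty\subseteq\taua$. Reversing this chain of equivalences shows the two topologies actually coincide, which I would remark on.

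The remaining inclusion $\taua\subseteq\tau$ is Theorem~\ref{thm:aura-topology}(b).
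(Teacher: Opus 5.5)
Your proof is correct, and it reaches (c) by a genuinely different route from the paper's.

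\textbf{Parts (a) and (b).} Here you follow the paper. Its idempotency step uses the same observation you isolate: if $\aura(x)$ meets $\cla^\infty(A)=\bigcup_k\cla^k(A)$, then it meets some $\cla^k(A)$, so $x\in\cla^{k+1}(A)$. The paper phrases this as an induction on $m$ for points $x\in\cla^m(\cla^\infty(A))$ and leaves ``the general case'' to the reader. Your version states complete additivity as a lemma and proves the fixed-point identity $\cla(\cla^\infty(A))=\cla^\infty(A)$. That makes the induction immediate and explains why stage $\omega$ suffices and no transfinite iteration is needed.

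\textbf{Part (c).} The paper starts from $A\in\taua$, derives $\cla(A^c)\subseteq A^c$, and iterates by monotonicity to get $\cla^\infty(A^c)=A^c$. It concludes $A\in\taua^\infty$, which is the inclusion $\taua\subseteq\taua^\infty$. That is the converse of the inclusion the statement asserts, so the paper's argument alone does not prove (c) as written.

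You instead characterize the $\cla^\infty$-closed sets as exactly the sets $B$ with $\cla(B)=B$. You then read off that the complement of such a set is $\aura$-open. This is precisely the stated inclusion $\taua^\infty\subseteq\taua$.

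Your argument and the paper's together give your closing remark that $\taua^\infty=\taua$ for every scope function. This also shows that the paper's concluding sentence cannot be right as a statement about the topologies. That sentence says the gap between $\taua^\infty$ and $\taua$ collapses ``precisely when $\aura$ is transitive,'' but there is never a gap. Transitivity is instead what makes $\cla$ itself idempotent.
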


\begin{proof}
\begin{enumerate}[label=(\alph*)]
    \item By extensivity of $\cla$, $\cla^n(A) \subseteq \cla^{n+1}(A)$ for every $n \geq 0$.

    \item We verify the Kuratowski axioms for $\cla^\infty$:
    \begin{itemize}
        \item $\cla^\infty(\emptyset) = \bigcup_{n=0}^\infty \cla^n(\emptyset) = \emptyset$.
        \item $A \subseteq \cla^\infty(A)$: since $A = \cla^0(A)$.
        \item \emph{Additivity}: $\cla^\infty(A \cup B) = \bigcup_n \cla^n(A \cup B) = \bigcup_n (\cla^n(A) \cup \cla^n(B)) = \cla^\infty(A) \cup \cla^\infty(B)$.
        \item \emph{Idempotency}: Let $x \in \cla^\infty(\cla^\infty(A))$. Then $x \in \cla^m(\cla^\infty(A))$ for some $m$. We show by induction on $m$ that $x \in \cla^\infty(A)$. For $m = 0$, $x \in \cla^\infty(A)$. For $m = 1$, $x \in \cla(\cla^\infty(A))$ means $\aura(x) \cap \cla^\infty(A) \neq \emptyset$, so there exist $y \in \aura(x)$ and $k$ with $y \in \cla^k(A)$. Then $x \in \cla(\cla^k(A)) = \cla^{k+1}(A) \subseteq \cla^\infty(A)$. The general case follows by induction.
    \end{itemize}

    \item If $A \in \taua$, then for every $x \in A$, $\aura(x) \subseteq A$. This implies $\cla(A^c) \subseteq A^c$ (if $x \notin A$, or if $x \in A$ then $\aura(x) \subseteq A$ so $\aura(x) \cap A^c = \emptyset$). By induction, $\cla^n(A^c) \subseteq A^c$ for all $n$, so $\cla^\infty(A^c) = A^c$, meaning $A^c$ is closed in $\tau_\aura^\infty$ and $A$ is open in $\tau_\aura^\infty$. Wait---we need to be more careful. Actually, $A \in \taua$ means $\inta(A) = A$, i.e., $\cla(A^c) \cap A = \emptyset$, i.e., $\cla(A^c) \subseteq A^c$. Then $\cla^2(A^c) = \cla(\cla(A^c)) \subseteq \cla(A^c) \subseteq A^c$ by monotonicity. By induction, $\cla^n(A^c) \subseteq A^c$ for all $n$, so $\cla^\infty(A^c) \subseteq A^c$. Since $A^c \subseteq \cla^\infty(A^c)$, we get $\cla^\infty(A^c) = A^c$, confirming $A \in \tau_\aura^\infty$.

    The inclusion $\taua \subseteq \tau$ follows from Theorem \ref{thm:aura-topology}(b). \qedhere
\end{enumerate}
\end{proof}

\begin{remark}\label{rem:finite-stabilization}
If $X$ is finite, then the chain $\cla^0(A) \subseteq \cla^1(A) \subseteq \cla^2(A) \subseteq \cdots$ must stabilize in at most $|X|$ steps, so $\cla^\infty(A) = \cla^{|X|}(A)$.
\end{remark}

\subsection{Special Types of Aura Functions}

\begin{definition}\label{def:special-auras}
Let $(X, \tau, \aura)$ be an $\aura$-space. We say that $\aura$ is:
\begin{enumerate}[label=(\alph*)]
    \item \emph{trivial} if $\aura(x) = X$ for every $x \in X$;
    \item \emph{discrete} if $\aura(x) = \{x\}$ for every $x \in X$ (possible only if $\tau$ is the discrete topology);
    \item \emph{monotone} if $x \in \aura(y)$ implies $\aura(x) \subseteq \aura(y)$ (i.e., transitive);
    \item \emph{symmetric} if $y \in \aura(x)$ implies $x \in \aura(y)$.
\end{enumerate}
\end{definition}

\begin{proposition}\label{prop:special-cases}
Let $(X, \tau, \aura)$ be an $\aura$-space.
\begin{enumerate}[label=(\alph*)]
    \item If $\aura$ is trivial, then $\cla(A) = X$ for every nonempty $A$, and $\taua = \{\emptyset, X\}$.
    \item If $\aura$ is discrete, then $\cla(A) = A$ for every $A$, and $\taua = \powerset(X)$.
    \item If $\aura$ is symmetric, then $x \in \cla(\{y\})$ if and only if $y \in \cla(\{x\})$.
\end{enumerate}
\end{proposition}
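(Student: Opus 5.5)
All three parts follow by unwinding Definition~\ref{def:aura-closure} and Definition~\ref{def:aura-open} directly. No earlier machinery is needed beyond the elementary facts in Theorem~\ref{thm:cech} and Theorem~\ref{thm:aura-topology}. I would treat the parts in order.

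For (a), let $A\neq\emptyset$ and $x\in X$. Since $\aura(x)=X$, we have $\aura(x)\cap A=A\neq\emptyset$, so $x\in\cla(A)$ and hence $\cla(A)=X$. For the topology, $\emptyset,X\in\taua$ by Theorem~\ref{thm:aura-topology}(a). Conversely, suppose $A\in\taua$ is nonempty and pick $x\in A$. Then $X=\aura(x)\subseteq A$, so $A=X$.

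For (b), assume $\aura(x)=\{x\}$ for all $x$. Then $x\in\cla(A)$ holds iff $\{x\}\cap A\neq\emptyset$, i.e.\ iff $x\in A$, so $\cla(A)=A$. Every $A\subseteq X$ is $\aura$-open, because $x\in A$ gives $\aura(x)=\{x\}\subseteq A$. Hence $\taua=\powerset(X)$.

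For (c), the key step is the chain of equivalences
\[
x\in\cla(\{y\})\iff \aura(x)\cap\{y\}\neq\emptyset\iff y\in\aura(x).
\]
Symmetry of $\aura$ turns $y\in\aura(x)$ into $x\in\aura(y)$. The same equivalence with $x$ and $y$ swapped then gives $y\in\cla(\{x\})$, and the converse direction is identical.

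There is no genuine obstacle here. The only point requiring care is in (a): the statement restricts to nonempty $A$ because $\cla(\emptyset)=\emptyset$ by Theorem~\ref{thm:cech}(a).
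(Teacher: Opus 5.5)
Your proposal is correct and follows the paper's proof essentially verbatim: each part is a direct unwinding of the definition of $\cla$, and (c) uses the same chain $x\in\cla(\{y\})\iff y\in\aura(x)\iff x\in\aura(y)\iff y\in\cla(\{x\})$. You also verify the claims $\taua=\{\emptyset,X\}$ in (a) and $\taua=\powerset(X)$ in (b); the paper states these but does not prove them, and your arguments for them are sound.
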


\begin{proof}
\begin{enumerate}[label=(\alph*)]
    \item If $A \neq \emptyset$, pick $a \in A$. For any $x \in X$, $\aura(x) = X \ni a$, so $\aura(x) \cap A \neq \emptyset$ and $x \in \cla(A)$.

    \item $\aura(x) \cap A = \{x\} \cap A \neq \emptyset$ iff $x \in A$, so $\cla(A) = A$.

    \item $x \in \cla(\{y\})$ iff $y \in \aura(x)$ iff (by symmetry) $x \in \aura(y)$ iff $y \in \cla(\{x\})$. \qedhere
\end{enumerate}
\end{proof}

\section{Generalized Open Sets in Aura Spaces}

In this section, we introduce five new classes of generalized open sets by combining the aura-closure operator $\cla$ with the classical interior and closure operators.

\begin{definition}\label{def:generalized-open}
Let $(X, \tau, \aura)$ be an $\aura$-space. A subset $A$ of $X$ is called:
\begin{enumerate}[label=(\alph*)]
    \item \textbf{$\aura$-semi-open} if $A \subseteq \cla(\inte(A))$;
    \item \textbf{$\aura$-pre-open} if $A \subseteq \inte(\cla(A))$;
    \item \textbf{$\aura$-$\alpha$-open} if $A \subseteq \inte(\cla(\inte(A)))$;
    \item \textbf{$\aura$-$\beta$-open} if $A \subseteq \cla(\inte(\cla(A)))$;
    \item \textbf{$\aura$-$b$-open} if $A \subseteq \cla(\inte(A)) \cup \inte(\cla(A))$.
\end{enumerate}
The collection of all $\aura$-semi-open (resp.\ $\aura$-pre-open, $\aura$-$\alpha$-open, $\aura$-$\beta$-open, $\aura$-$b$-open) sets is denoted by $\aura SO(X)$ (resp.\ $\aura PO(X)$, $\aura\alpha O(X)$, $\aura\beta O(X)$, $\aura bO(X)$).
\end{definition}

\begin{theorem}\label{thm:hierarchy}
Let $(X, \tau, \aura)$ be an $\aura$-space. The following inclusions hold:
\begin{center}
\begin{tikzpicture}[
    node distance=1.8cm and 2.5cm,
    every node/.style={font=\small},
    arrow/.style={-{Stealth[length=3mm]}, thick}
]
\node (open) {$\tau$};
\node (aopen) [below=of open] {$\aura\alpha O(X)$};
\node (semi) [below left=of aopen] {$\aura SO(X)$};
\node (pre) [below right=of aopen] {$\aura PO(X)$};
\node (b) [below right=of semi] {$\aura bO(X)$};
\node (beta) [below=of b] {$\aura\beta O(X)$};

\draw[arrow] (open) -- (aopen);
\draw[arrow] (aopen) -- (semi);
\draw[arrow] (aopen) -- (pre);
\draw[arrow] (semi) -- (b);
\draw[arrow] (pre) -- (b);
\draw[arrow] (b) -- (beta);
\end{tikzpicture}
\end{center}
where $A \to B$ means every $A$-type set is also a $B$-type set. Moreover, each implication is strict in general.
\end{theorem}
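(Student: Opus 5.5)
The inclusions follow from the monotonicity and extensivity of $\cla$ (Theorem~\ref{thm:cech}(b),(c)) together with the monotonicity of $\inte$. The strictness claims I would settle by explicit counterexamples on small finite spaces.

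\emph{Inclusions.} I would check each arrow in turn.
\begin{enumerate}[label=(\roman*)]
\item $\tau \subseteq \aura\alpha O(X)$. If $A\in\tau$, then $A=\inte(A)\subseteq\cla(\inte(A))$ by extensivity. Applying $\inte$ and using that $A$ is open gives $A=\inte(A)\subseteq\inte(\cla(\inte(A)))$.
\item $\aura\alpha O(X)\subseteq\aura SO(X)$. This is immediate from $\inte(\cla(\inte(A)))\subseteq\cla(\inte(A))$.
\item $\aura\alpha O(X)\subseteq\aura PO(X)$. Since $\inte(A)\subseteq A$, monotonicity of $\cla$ and of $\inte$ gives $\inte(\cla(\inte(A)))\subseteq\inte(\cla(A))$.
\item $\aura SO(X)\subseteq\aura bO(X)$ and $\aura PO(X)\subseteq\aura bO(X)$. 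These are trivial, since each defining set is one term of the union.
\item $\aura bO(X)\subseteq\aura\beta O(X)$. By extensivity, $\inte(\cla(A))\subseteq\cla(\inte(\cla(A)))$. Also $A\subseteq\cla(A)$ gives $\inte(A)\subseteq\inte(\cla(A))$, hence $\cla(\inte(A))\subseteq\cla(\inte(\cla(A)))$. So both terms of the union lie in $\cla(\inte(\cla(A)))$.
\end{enumerate}

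\emph{Strictness.} Here I need six examples, and I would look for them among spaces on $X=\{a,b,c\}$ or $\{a,b,c,d\}$.
\begin{itemize}
\item Indiscrete topology with the trivial aura. Every nonempty $A$ has $\cla(A)=X$, so $\inte(\cla(A))=X$ and $A$ is $\aura$-pre-open. If $A\neq X$ is nonempty, then $\inte(A)=\emptyset$, so $A$ is not $\aura$-semi-open. This single example separates $\aura PO$ from $\aura SO$, $\aura bO$ from $\aura SO$, and $\aura PO$ from $\aura\alpha O$.
\item Trivial aura on a non-indiscrete space, for instance $\tau=\{\emptyset,\{a\},X\}$. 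Any non-open $A$ with nonempty interior, such as $\{a,b\}$, satisfies $\cla(\inte(A))=X$, so it is $\aura$-$\alpha$-open. This separates $\tau$ from $\aura\alpha O(X)$.
\item For $\aura SO\not\subseteq\aura\alpha O$ and $\aura SO\not\subseteq\aura PO$, I need a set $A$ for which $\cla(\inte(A))$ covers $A$ but $\cla(A)$ is not open around some point of $A$. Trivial auras always give $\cla$-values equal to $X$, so I would use a non-transitive aura, say $\tau=\{\emptyset,\{a\},\{a,b\},X\}$ with $\aura(a)=\{a\}$, $\aura(b)=\{a,b\}$, $\aura(c)=X$. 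Here $\cla(\{a\})=X$, so the test $\cla(\inte(A))$ collapses, and I would instead try $A=\{b,c\}$ or vary the aura values until $\cla(\inte(A))\supseteq A$ while $\inte(\cla(A))\not\supseteq A$.
\item The last gap, $\aura bO\subsetneq\aura\beta O$, needs $A$ with both $\cla(\inte(A))$ and $\inte(\cla(A))$ too small to cover $A$, yet with $\cla(\inte(\cla(A)))\supseteq A$. I would aim for a set with empty interior, so the first term vanishes. Its closure $\cla(A)$ should have a small nonempty interior $U$ that misses part of $A$, while the aura of each point of $A$ meets $U$.
\end{itemize}
I also plan to give real-line versions using $\aura_\varepsilon$ from Example~\ref{ex:real-aura}, for instance $\mathbb{Q}\cap(0,1)$ for the pre-open-but-not-semi-open case.

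The inclusions are routine. The main obstacle is producing the examples for $\aura SO\not\subseteq\aura PO$, $\aura SO\not\subseteq\aura\alpha O$ and $\aura bO\subsetneq\aura\beta O$, because $\cla$ tends to enlarge sets to open ones. For these I would first try a systematic search over the finitely many scope functions on a fixed three- or four-point non-discrete topology.
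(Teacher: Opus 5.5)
Your inclusion arguments are correct and essentially the paper's. The paper's proof covers only the inclusions and leaves strictness to the examples after it; it also gets $\tau\subseteq\aura\alpha O(X)$ from $\cl\subseteq\cla$ rather than from extensivity, which makes no difference. Your indiscrete and $\{\emptyset,\{a\},X\}$ examples correctly witness strictness of $\tau\to\aura\alpha O(X)$, $\aura\alpha O(X)\to\aura PO(X)$ and $\aura SO(X)\to\aura bO(X)$.

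The gap is that the other three arrows, $\aura\alpha O(X)\to\aura SO(X)$, $\aura PO(X)\to\aura bO(X)$ and $\aura bO(X)\to\aura\beta O(X)$, get only a search plan. Moreover, the one concrete candidate you name cannot work for any scope function. In $\tau=\{\emptyset,\{a\},\{a,b\},X\}$ every nonempty open set contains $a$, so every $\aura(x)$ does too. A nonempty $\aura$-semi-open $A$ has $\inte(A)\neq\emptyset$ (as $\cla(\emptyset)=\emptyset$), so $a\in\inte(A)$ and $\cla(\inte(A))=X$. Hence $A$ is automatically $\aura$-$\alpha$-open; your $A=\{b,c\}$ even has empty interior.

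The obstruction is general. Suppose $A$ is $\aura$-semi-open but not $\aura$-$\alpha$-open, and take $x\in A\setminus\inte(\cla(\inte(A)))$. The open neighbourhood $\aura(x)$ cannot lie inside $\cla(\inte(A))$, so some $y\in\aura(x)$ has $\aura(y)\cap\inte(A)=\emptyset$. Thus $\tau$ must contain two disjoint nonempty open sets, and your search should be restricted to such topologies. For example, take $X=\{a,b,c\}$, $\tau=\{\emptyset,\{a\},\{c\},\{a,c\},X\}$, $\aura(a)=\{a\}$, $\aura(b)=X$, $\aura(c)=\{c\}$ and $A=\{a,b\}$. Then $\cla(\inte(A))=\cla(\{a\})=\{a,b\}\supseteq A$, while $\cla(A)=\{a,b\}$ and $b\notin\inte(\cla(A))=\{a\}$. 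So $A$ is $\aura$-semi-open but not $\aura$-pre-open, hence not $\aura$-$\alpha$-open, which settles the first two missing arrows. The paper's Example~\ref{ex:semi-not-pre-clean} is a four-point version of the same construction.

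For $\aura bO(X)\to\aura\beta O(X)$ your heuristic is the right one, but it still has to be carried out. Here the paper gives you nothing to fall back on: none of its examples separates these two classes. A witness in the spirit of your sketch is $X=\{a,b,c\}$, $\tau=\{\emptyset,\{b\},\{c\},\{b,c\},X\}$, $\aura(a)=\aura(c)=X$, $\aura(b)=\{b\}$, $A=\{a\}$. Then $\inte(A)=\emptyset$, $\cla(A)=\{a,c\}$ and $a\notin\inte(\cla(A))=\{c\}$, so $A\notin\aura bO(X)$. But $a\in\cla(\{c\})=\{a,c\}=\cla(\inte(\cla(A)))$, so $A\in\aura\beta O(X)$.
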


\begin{proof}
\textbf{$\tau \subseteq \aura\alpha O(X)$}: Let $A \in \tau$. Then $\inte(A) = A$, so $\cla(\inte(A)) = \cla(A) \supseteq \cl(A) \supseteq A$. Hence $\inte(\cla(\inte(A))) = \inte(\cla(A)) \supseteq \inte(\cl(A)) \supseteq \inte(A) = A$ (since $A$ is open and $A \subseteq \cl(A)$ implies $A = \inte(A) \subseteq \inte(\cl(A))$). Thus $A \subseteq \inte(\cla(\inte(A)))$.

\textbf{$\aura\alpha O(X) \subseteq \aura SO(X)$}: If $A \subseteq \inte(\cla(\inte(A)))$, then since $\inte(B) \subseteq B$ for any $B$, we get $A \subseteq \inte(\cla(\inte(A))) \subseteq \cla(\inte(A))$.

\textbf{$\aura\alpha O(X) \subseteq \aura PO(X)$}: If $A \subseteq \inte(\cla(\inte(A)))$, then $\inte(A) \subseteq A$ gives $\cla(\inte(A)) \subseteq \cla(A)$ (monotonicity), so $A \subseteq \inte(\cla(\inte(A))) \subseteq \inte(\cla(A))$.

\textbf{$\aura SO(X) \subseteq \aura bO(X)$}: If $A \subseteq \cla(\inte(A))$, then $A \subseteq \cla(\inte(A)) \subseteq \cla(\inte(A)) \cup \inte(\cla(A))$.

\textbf{$\aura PO(X) \subseteq \aura bO(X)$}: Similarly.

\textbf{$\aura bO(X) \subseteq \aura\beta O(X)$}: If $A \subseteq \cla(\inte(A)) \cup \inte(\cla(A))$, note that $\inte(A) \subseteq \inte(\cla(A))$ (since $A \subseteq \cla(A)$). Hence $\cla(\inte(A)) \subseteq \cla(\inte(\cla(A)))$. Also $\inte(\cla(A)) \subseteq \cla(\inte(\cla(A)))$ (by extensivity of $\cla$). Therefore $A \subseteq \cla(\inte(\cla(A)))$.
\end{proof}

\begin{theorem}\label{thm:classical-comparison}
Let $(X, \tau, \aura)$ be an $\aura$-space. Since $\cl(A) \subseteq \cla(A)$ for all $A \subseteq X$, we have:
\begin{enumerate}[label=(\alph*)]
    \item $SO(X, \tau) \subseteq \aura SO(X)$;
    \item $PO(X, \tau) \subseteq \aura PO(X)$;
    \item $\alpha O(X, \tau) \subseteq \aura\alpha O(X)$;
    \item $\beta O(X, \tau) \subseteq \aura\beta O(X)$.
\end{enumerate}
Each inclusion can be strict.
\end{theorem}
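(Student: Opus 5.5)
The inclusions follow from the pointwise comparison $\cl(B) \subseteq \cla(B)$ of Theorem~\ref{thm:cech}(e), applied with $B$ chosen according to the position of the closure in each defining formula. I will use only that comparison and the monotonicity of $\inte$.

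For (a), if $A \subseteq \cl(\inte(A))$, then taking $B=\inte(A)$ gives $A \subseteq \cla(\inte(A))$. For (b), if $A \subseteq \inte(\cl(A))$, then $\cl(A)\subseteq\cla(A)$ and monotonicity of $\inte$ give $A \subseteq \inte(\cla(A))$. For (c), I use $B=\inte(A)$ and then apply $\inte$: $A\subseteq \inte(\cl(\inte(A)))\subseteq \inte(\cla(\inte(A)))$. For (d) there are two closures to replace. First, $\cl(A)\subseteq\cla(A)$ gives $\inte(\cl(A))\subseteq \inte(\cla(A))$. Next, taking $B=\inte(\cl(A))$ gives $\cl(\inte(\cl(A)))\subseteq \cla(\inte(\cl(A)))$. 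Finally, monotonicity of $\cla$ (Theorem~\ref{thm:cech}(c)) gives $\cla(\inte(\cl(A)))\subseteq\cla(\inte(\cla(A)))$. Chaining these, $A\subseteq \cl(\inte(\cl(A)))\subseteq \cla(\inte(\cla(A)))$.

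For strictness I will use a single small example. Take $X=\{a,b,c\}$ with $\tau=\{\emptyset,\{a\},X\}$ and $\aura(a)=\{a\}$, $\aura(b)=\aura(c)=X$. Then $\cla(B)=B\cup\{b,c\}$ for every nonempty $B$, so $\cla$ is very large, while $\cl(\{a\})=X$.

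For (a) the example does not work, and a different space is needed. The set $\{b\}$ has empty interior, so it is not semi-open, and it is not $\aura$-semi-open either, because $\cla(\emptyset)=\emptyset$. In fact any set with empty interior fails both conditions, so a strict example for (a) needs a set with nonempty interior whose classical closure is small. I will take the discrete space $X=\{a,b\}$, $\tau=\powerset(X)$ — but there every set is open, so both classes are everything. Instead I will take $X=\{a,b,c\}$ with $\tau=\{\emptyset,\{a\},\{a,b\},X\}$, choose $\aura(c)=X$, and let $A=\{a,c\}$. Then $\inte(A)=\{a\}$ and $\cl(\{a\})=X\supseteq A$, so $A$ is already classically semi-open and the example fails. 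Finding a genuinely strict example for (a) is the real obstacle. The witness must be a set $A$ with $A\not\subseteq\cl(\inte A)$ but $A\subseteq\cla(\inte A)$, which requires points outside $\cl(\inte A)$ whose aura meets $\inte A$. I will use $\tau=\{\emptyset,\{a\},\{b\},\{a,b\},X\}$ on $X=\{a,b,c\}$ with $\aura(c)=X$ and $A=\{a,c\}$. Here $\inte A=\{a\}$ and $\cl\{a\}=\{a\}$, whereas $\cla\{a\}\supseteq\{a,c\}$. So $A$ is $\aura$-semi-open but not semi-open.

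The same kind of space handles the other parts. For (b), take $A=\{c\}$: $\cl\{c\}=\{c\}$ has empty interior, so $A$ is not pre-open. However $\cla\{c\}$ contains every $x$ whose aura contains $c$, namely $c$ itself; to make $c$ interior I will need $\aura$ of some point of an open set around $c$ to meet $c$, which forces $\tau$ to contain an open set containing $c$ that is not $\{c\}$. I expect this to be arranged by small adjustments. For (c) the same $A=\{a,c\}$ works, since $\inte(\cla\{a\})=\inte X=X$ once $\aura(b)$ also meets $a$. For (d), (c) already suffices, because $\alpha O\subseteq\beta O$ and $\cla$ dominates $\cl$ in the outer closure: the set $\{a,c\}$ is not $\beta$-open when $\cl(\inte\cl\{a,c\})=\cl\{a\}=\{a\}$.
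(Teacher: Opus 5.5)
Your proofs of the four inclusions are correct and follow the paper's argument. The paper writes out only (a) and says the rest follow similarly; your two-step replacement in (d) is exactly what that requires.

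The gap is in the strictness part, which rests on a miscalculation. In $\tau=\{\emptyset,\{a\},\{b\},\{a,b\},X\}$ the only open set containing $c$ is $X$, so $c$ lies in the closure of every nonempty set. Hence $\cl\{a\}=\{a,c\}$, not $\{a\}$, and $A=\{a,c\}$ is classically semi-open.

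The same error sinks (d). There $\cl(\inte(\cl A))=\cl(\inte\{a,c\})=\cl\{a\}=\{a,c\}\supseteq A$, so $A$ is $\beta$-open. Your remark that (c) ``already suffices'' for (d) is also a non sequitur: a set in $\aura\alpha O(X)\setminus\alpha O(X,\tau)$ lies in $\aura\beta O(X)$, but it may well lie in $\beta O(X,\tau)$, as this one does.

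Your conclusion for (c) happens to be true, but only because $\inte(\cl\{a\})=\inte\{a,c\}=\{a\}\not\supseteq A$, which you never check. Part (b) is left at ``small adjustments''; taking $\aura\equiv X$ makes $\cla\{c\}=X$ and would finish it.

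The structural point you are missing is this. If $\aura(x)$ is the smallest open neighbourhood of $x$, then $x\in\cla(B)$ iff $x\in\cl(B)$, so such a point can never separate an aura class from its classical counterpart. A witness needs a point $c\in A$ with some open neighbourhood missing the relevant set, while $\aura(c)$ is a strictly larger open set meeting it. Choosing $\aura(c)=X$ when $X$ is already the only neighbourhood of $c$ gains nothing. In fact no choice of $\aura$ separates (a) in your space: $a$ and $b$ are isolated, so a point of $A\setminus\cl(\inte A)$ could only be $c$.

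It also saves work to note that one set in $\aura\alpha O(X)\setminus\beta O(X,\tau)$ witnesses all four strict inclusions at once, by the two hierarchies. Adding $\{b,c\}$ to your topology produces such a set. Take $\tau=\{\emptyset,\{a\},\{b\},\{a,b\},\{b,c\},X\}$, $\aura(a)=\{a\}$, $\aura(b)=\aura(c)=X$ and $A=\{a,c\}$. Then $\inte(\cla(\inte A))=\inte(\cla\{a\})=\inte X=X\supseteq A$. Meanwhile $A$ and $\{a\}$ are now closed, so $\cl(\inte(\cl A))=\{a\}\not\ni c$.

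The paper's real-line witness for (a), $A=\{0\}\cup(1,2)$ in $(\R,\tau_u,\aura_2)$, is of this kind. There $\inte(\cla((1,2)))=(-1,4)\supseteq A$, whereas $\cl(\inte(\cl A))=[1,2]\not\ni 0$.
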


\begin{proof}
Since $\cl(A) \subseteq \cla(A)$, we have $\cla(\inte(A)) \supseteq \cl(\inte(A))$. If $A \in SO(X,\tau)$, then $A \subseteq \cl(\inte(A)) \subseteq \cla(\inte(A))$, so $A \in \aura SO(X)$. The other parts follow similarly.
\end{proof}

Now we provide counterexamples to show that each inclusion in the hierarchy is strict.

\begin{example}\label{ex:strict-finite}
Let $X = \{a, b, c, d\}$, $\tau = \{\emptyset, \{a\}, \{b\}, \{a,b\}, \{a,b,c\}, X\}$.
Define $\aura: X \to \tau$ by
\[
\aura(a) = \{a\}, \quad \aura(b) = \{a,b\}, \quad \aura(c) = \{a,b,c\}, \quad \aura(d) = X.
\]
Then:
\begin{enumerate}[label=(\roman*)]
    \item $A = \{a, c\}$: We have $\inte(A) = \{a\}$ (since $\{a\}$ is the largest open set in $A$) and $\cla(\inte(A)) = \cla(\{a\}) = \{a, b, c, d\} = X$ (since $a \in \aura(x)$ for all $x$: $a \in \{a\} = \aura(a)$, $a \in \{a,b\} = \aura(b)$, $a \in \{a,b,c\} = \aura(c)$, $a \in X = \aura(d)$). So $A \subseteq X = \cla(\inte(A))$. Thus $A$ is $\aura$-semi-open. But $\cl(\inte(A)) = \cl(\{a\}) = \{a, d\}$ (since $\{a\}^c = \{b,c,d\}$ is not open, and the largest open set in $\{b,c,d\}$ is $\{b\}$, so $\{a\}$ is not closed; we compute $\cl(\{a\})$: the closed sets are $X, \{d\}, \{c,d\}, \{c,d\}, \{d\}, \emptyset$---let us recalculate). The closed sets are complements of open sets: $X^c = \emptyset$, $\{a\}^c = \{b,c,d\}$, $\{b\}^c = \{a,c,d\}$, $\{a,b\}^c = \{c,d\}$, $\{a,b,c\}^c = \{d\}$, $\emptyset^c = X$. So the closed sets are $\{\emptyset, \{b,c,d\}, \{a,c,d\}, \{c,d\}, \{d\}, X\}$. Hence $\cl(\{a\}) = \{a,c,d\} \cap X \cap \cdots$. Actually, $\cl(\{a\})$ is the smallest closed set containing $\{a\}$. The closed sets containing $\{a\}$ are $\{a,c,d\}$ and $X$. So $\cl(\{a\}) = \{a,c,d\}$. Then $A = \{a,c\} \subseteq \{a,c,d\} = \cl(\inte(A))$. So $A$ is also classically semi-open in this case. Let us find a better example.

    \item Consider $A = \{b, d\}$. Then $\inte(A) = \{b\}$ and $\cla(\{b\}) = \{x : \aura(x) \cap \{b\} \neq \emptyset\} = \{b, c, d\}$ (since $b \in \aura(b), b \in \aura(c), b \in \aura(d)$, but $b \notin \aura(a) = \{a\}$). So $\cla(\inte(A)) = \{b,c,d\}$ and $A = \{b,d\} \subseteq \{b,c,d\}$. Thus $A$ is $\aura$-semi-open. Now $\cl(\{b\}) = \{b,c,d\}$ (the smallest closed set containing $\{b\}$: checking, $\{b,c,d\}$ is closed since $\{a\} \in \tau$). So $A \subseteq \cl(\inte(A)) = \{b,c,d\}$. $A$ is also classically semi-open. We need a set that is $\aura$-semi-open but not classically semi-open.

    \item Take a different space. Let $X = \{a,b,c\}$, $\tau = \{\emptyset, \{a\}, X\}$, and $\aura(a) = \{a\}$, $\aura(b) = X$, $\aura(c) = X$. Let $A = \{a, b\}$. Then $\inte(A) = \{a\}$ and $\cla(\{a\}) = \{x : \aura(x) \cap \{a\} \neq \emptyset\} = \{a, b, c\} = X$. So $A \subseteq X = \cla(\inte(A))$; $A$ is $\aura$-semi-open. But $\cl(\{a\}) = X$ (closed sets are $\emptyset, \{b,c\}, X$; smallest containing $\{a\}$ is $X$). So $A \subseteq X = \cl(\inte(A))$; $A$ is also classically semi-open. Hmm, since $\cl(B) \subseteq \cla(B)$, whenever $A \subseteq \cla(\inte(A))$ could fail to imply $A \subseteq \cl(\inte(A))$, we need $\cl(\inte(A)) \subsetneq \cla(\inte(A))$ and $A$ sits in between.
\end{enumerate}
We provide a definitive example. Let $X = \{a,b,c\}$, $\tau = \powerset(X)$ (discrete), $\aura(a) = \{a,b\}$, $\aura(b) = \{b\}$, $\aura(c) = \{c\}$. Take $A = \{a\}$. Then $\inte(A) = \{a\}$ (discrete). $\cl(\inte(A)) = \cl(\{a\}) = \{a\}$ (discrete), so $A \subseteq \cl(\inte(A))$; classically semi-open. Now $\cla(\{a\}) = \{a\}$ (since $a \in \aura(a) = \{a,b\}$ so $\aura(a) \cap \{a\} \neq \emptyset$; $\aura(b) \cap \{a\} = \{b\} \cap \{a\} = \emptyset$; $\aura(c) \cap \{a\} = \emptyset$). So $\cla(\{a\}) = \{a\}$. In the discrete topology, $\cl = \cla$ when auras are singletons, but here $\cla$ can differ.

Let us try: $X = \{a,b,c\}$, $\tau = \{\emptyset, \{b\}, \{a,b\}, X\}$, $\aura(a) = \{a,b\}$, $\aura(b) = \{b\}$, $\aura(c) = X$. Take $A = \{a,c\}$. Then $\inte(A) = \emptyset$ (no nonempty open set is contained in $\{a,c\}$). So both $\cl(\inte(A)) = \emptyset$ and $\cla(\inte(A)) = \emptyset$. Not semi-open in either sense. Take $A = \{b,c\}$. $\inte(A) = \{b\}$. $\cl(\{b\}) = \{b\}$ (since $\{b\}^c = \{a,c\}$; is $\{a,c\}$ open? No. Closed sets: $X, \{a,c\}, \{c\}, \emptyset$. So $\cl(\{b\}) = X$). Wait: $\{b\}^c = \{a,c\}$, $\{a,b\}^c = \{c\}$, $X^c = \emptyset$, $\emptyset^c = X$. So closed sets are $\{X, \{a,c\}, \{c\}, \emptyset\}$. $\cl(\{b\})$ = smallest closed set containing $\{b\}$: $\{a,c\}$ doesn't contain $b$; $\{c\}$ doesn't; $X$ does. So $\cl(\{b\}) = X$. And $\cla(\{b\}) = \{x : \aura(x) \cap \{b\} \neq \emptyset\} = \{a, b, c\} = X$. Both equal $X$.

OK, to find strict examples, the key is: $\cl(\inte(A)) \subsetneq \cla(\inte(A))$ and $A$ is contained in the latter but not the former. In a discrete topology, $\cl = \text{id}$ so $\cl(\inte(A)) = A$ always. So we need a non-discrete space where $\cla$ is strictly larger than $\cl$. But $\cl(B) \subseteq \cla(B)$ always, and in non-discrete spaces, $\cla$ can be much larger. We just need $A \not\subseteq \cl(\inte(A))$ but $A \subseteq \cla(\inte(A))$.

Let $X = \{a,b,c,d\}$, $\tau = \{\emptyset, \{a\}, \{a,b\}, X\}$. Closed sets: $X, \{b,c,d\}, \{c,d\}, \emptyset$. Define $\aura(a) = \{a\}$, $\aura(b) = \{a,b\}$, $\aura(c) = X$, $\aura(d) = X$. Take $A = \{b\}$. $\inte(\{b\}) = \emptyset$. Not helpful. Take $A = \{a,c\}$. $\inte(A) = \{a\}$. $\cl(\{a\}) = ?$: smallest closed set containing $\{a\}$ is $X$ (since $\{b,c,d\}, \{c,d\}$ don't contain $a$). So $\cl(\{a\}) = X$ and $A \subseteq X$. Classically semi-open too. Hmm, in this topology, $\cl$ maps most things to $X$ since the topology is very coarse.

The strict inclusion between classical and aura versions will be most visible with finer topologies and wider auras. Final approach: use discrete topology (so $\cl(B) = B$) and choose auras that expand things.

$X = \{a,b,c\}$, $\tau = \powerset(X)$, $\aura(a) = \{a,b\}$, $\aura(b) = \{b,c\}$, $\aura(c) = \{c\}$. Take $A = \{a,c\}$. $\inte(A) = \{a,c\}$ (discrete). $\cl(\{a,c\}) = \{a,c\}$ (discrete). So classically $A \subseteq \cl(\inte(A)) = \{a,c\}$, hence classically semi-open. $\cla(\{a,c\}) = \{x : \aura(x) \cap \{a,c\} \neq \emptyset\} = \{a, b, c\} = X$ (since $a \in \aura(a)$, $c \in \aura(b)$... wait, $\aura(b) = \{b,c\}$, $\{b,c\} \cap \{a,c\} = \{c\} \neq \emptyset$, and $\aura(c) = \{c\}$, $\{c\} \cap \{a,c\} = \{c\} \neq \emptyset$). So $\cla(\{a,c\}) = X$. Here $A \subsetneq X$ shows $\cla$ is strictly larger than $\cl$ on $A$, but $A$ is semi-open in both senses.

For a set $A$ that is $\aura$-semi-open but NOT classically semi-open, we need $\inte(A) = \emptyset$ in a non-discrete space but $\cla(\inte(A)) \supseteq A$... wait, if $\inte(A) = \emptyset$, then $\cla(\emptyset) = \emptyset \not\supseteq A$. So we need $\inte(A) \neq \emptyset$, $A \not\subseteq \cl(\inte(A))$, and $A \subseteq \cla(\inte(A))$.

$X = \{a,b,c\}$, $\tau = \{\emptyset, \{a\}, \{a,b\}, X\}$. Closed: $\{X, \{b,c\}, \{c\}, \emptyset\}$. $\aura(a) = \{a\}$, $\aura(b) = \{a,b\}$, $\aura(c) = X$. Take $A = \{a,c\}$. $\inte(A) = \{a\}$. $\cl(\{a\}) = X$ (smallest closed containing $\{a\}$ is $X$). So $A \subseteq X$, classically semi-open. Take $A = \{b,c\}$. $\inte(A) = \emptyset$ (no open subset of $\{b,c\}$). $\cla(\emptyset) = \emptyset$. Not $\aura$-semi-open.

Hmm, let me try yet another approach. $\tau = \{\emptyset, \{c\}, \{a,c\}, X\}$, closed sets $= \{X, \{a,b\}, \{b\}, \emptyset\}$. $\aura(a) = \{a,c\}$, $\aura(b) = X$, $\aura(c) = \{c\}$. Take $A = \{a,b\}$. $\inte(A) = \emptyset$ (no open in $\{a,b\}$). Not useful.

$\tau = \{\emptyset, \{a\}, \{c\}, \{a,c\}, \{a,b,c\}, X\}$ on $\{a,b,c,d\}$. $\aura(a) = \{a\}$, $\aura(b) = \{a,b,c\}$, $\aura(c) = \{c\}$, $\aura(d) = X$. Take $A = \{a,b\}$. $\inte(A) = \{a\}$. $\cl(\{a\})$: closed sets are $\{X, \{b,c,d\}, \{a,b,d\}, \{b,d\}, \{d\}, \emptyset\}$. Smallest closed containing $\{a\}$: $\{a,b,d\}$ contains $a$? The complement of $\{c\}$ is $\{a,b,d\}$, yes. $\{b,c,d\}$ doesn't contain $a$. So $\cl(\{a\}) = \{a,b,d\}$. Thus $A = \{a,b\} \subseteq \{a,b,d\} = \cl(\inte(A))$. Classically semi-open.

$\cla(\{a\}) = \{x : \aura(x) \cap \{a\} \neq \emptyset\}$. $\aura(a) = \{a\}: a \in \{a\}$, yes. $\aura(b) = \{a,b,c\}: a \in \{a,b,c\}$, yes. $\aura(c) = \{c\}: c \notin \{a\}$, no. $\aura(d) = X: a \in X$, yes. So $\cla(\{a\}) = \{a,b,d\}$.

Here $\cl(\{a\}) = \cla(\{a\}) = \{a,b,d\}$. Same. The problem is when we have a coarse topology, the classical closure is already large.

\textbf{The definitive example}: $X = \{a,b,c\}$, $\tau = \{\emptyset, \{a,b\}, X\}$. Closed: $\{X, \{c\}, \emptyset\}$. So $\cl(\{a\}) = X$, $\cl(\{b\}) = X$, $\cl(\{a,b\}) = X$. Since every nonempty set has closure $X$ in this coarse topology, every set is classically semi-open (as long as $\inte(A) \neq \emptyset$, $\cl(\inte(A)) = X \supseteq A$). This makes strict examples hard.

A clean approach: use the real line.

$(\R, \tau_u, \aura_\varepsilon)$ with $\varepsilon > 0$. Let $A = [0,1)$. $\inte(A) = (0,1)$. $\cl((0,1)) = [0,1]$. $\cla((0,1)) = \{x : (x-\varepsilon, x+\varepsilon) \cap (0,1) \neq \emptyset\} = (-\varepsilon, 1+\varepsilon)$.

If $\varepsilon > 0$: $A = [0,1) \subseteq (-\varepsilon, 1+\varepsilon) = \cla(\inte(A))$, so $A$ is $\aura$-semi-open.
$A = [0,1) \subseteq [0,1] = \cl(\inte(A))$, so $A$ is also classically semi-open.

Let me try $A = \{0\} \cup (1,2)$. $\inte(A) = (1,2)$. $\cl((1,2)) = [1,2]$. $A \not\subseteq [1,2]$ since $0 \notin [1,2]$. Not classically semi-open. $\cla((1,2)) = (1-\varepsilon, 2+\varepsilon)$. If $\varepsilon > 1$: $0 \in (1-\varepsilon, 2+\varepsilon)$, so $A \subseteq \cla(\inte(A))$. $\aura$-semi-open! Not classically semi-open!

This is a clean example for $\varepsilon > 1$.
\end{example}

\begin{example}[Strict inclusion: $\aura$-semi-open $\not\Rightarrow$ classically semi-open]\label{ex:strict-real}
Consider $(\R, \tau_u, \aura_2)$ with $\aura_2(x) = (x-2, x+2)$. Let $A = \{0\} \cup (1, 2)$. Then $\inte(A) = (1,2)$, $\cl(\inte(A)) = [1,2]$, and $0 \notin [1,2]$, so $A$ is not classically semi-open. However, $\cla(\inte(A)) = \cla((1,2)) = (-1, 4)$, and $A \subseteq (-1, 4)$, so $A$ is $\aura$-semi-open.
\end{example}

\begin{example}[Strict inclusion: $\aura$-semi-open $\not\Rightarrow$ $\aura$-pre-open]\label{ex:semi-not-pre}
Let $X = \{a,b,c,d\}$, $\tau = \{\emptyset, \{a\}, \{b\}, \{a,b\}, \{a,b,c\}, X\}$, and define $\aura(a) = \{a\}$, $\aura(b) = \{a,b\}$, $\aura(c) = \{a,b,c\}$, $\aura(d) = X$. Consider $A = \{a,d\}$. Then $\inte(A) = \{a\}$ and $\cla(\{a\}) = X$ (since $a \in \aura(x)$ for all $x$). Hence $A \subseteq X = \cla(\inte(A))$, so $A$ is $\aura$-semi-open. Now $\cla(A) = \cla(\{a,d\}) = X$ and $\inte(\cla(A)) = \inte(X) = X$, so $A \subseteq X$, meaning $A$ is also $\aura$-pre-open. This specific example does not separate them; we seek one that does.

Instead, consider $(\R, \tau_u, \aura_1)$ with $\aura_1(x) = (x-1,x+1)$. Let $A = [0,1) \cup \{3\}$. Then $\inte(A) = (0,1)$ and $\cla((0,1)) = (-1, 2)$. So $A = [0,1) \cup \{3\}$: $3 \notin (-1,2)$, hence $A \not\subseteq \cla(\inte(A))$. Not $\aura$-semi-open with this choice.

Let $A = (0,1) \cup \{2\}$. $\inte(A) = (0,1)$. $\cla((0,1)) = (-1,2)$. $2 \notin (-1,2)$ (endpoint excluded). Not $\aura$-semi-open. Take $\aura_2$ instead: $\cla((0,1)) = (-2, 3)$, $2 \in (-2,3)$. $A \subseteq (-2,3)$. $A$ is $\aura_2$-semi-open. Now for pre-open: $\cla(A) = \cla((0,1) \cup \{2\}) = (-2, 3) \cup (0, 4) = (-2, 4)$. $\inte(\cla(A)) = (-2, 4)$ (open set). $A \subseteq (-2,4)$. Also $\aura_2$-pre-open. Still both.

The separation between semi-open and pre-open requires a carefully constructed finite example.
Let $X = \{a,b,c,d\}$, $\tau = \{\emptyset, \{a\}, \{a,b\}, X\}$. Closed: $\{X, \{b,c,d\}, \{c,d\}, \emptyset\}$.
$\aura(a) = \{a\}$, $\aura(b) = \{a,b\}$, $\aura(c) = X$, $\aura(d) = X$.

$A = \{b,c\}$. $\inte(A) = \emptyset$. $\cla(\emptyset) = \emptyset$. Not $\aura$-semi-open. $\cla(A) = \{x: \aura(x) \cap \{b,c\} \neq \emptyset\}$. $\aura(a) = \{a\}$: no. $\aura(b) = \{a,b\}$: $b \in \{b,c\}$, yes. $\aura(c) = X$: yes. $\aura(d) = X$: yes. So $\cla(A) = \{b,c,d\}$. $\inte(\{b,c,d\})$: largest open subset of $\{b,c,d\}$. $\{a,b\} \not\subseteq \{b,c,d\}$. $\{a\} \not\subseteq \{b,c,d\}$. So $\inte(\{b,c,d\}) = \emptyset$. $A \not\subseteq \emptyset$. Not $\aura$-pre-open either. Hmm.

Let me try $\tau = \{\emptyset, \{a\}, \{c\}, \{a,c\}, \{a,b,c\}, X\}$ on $X = \{a,b,c,d\}$. Closed: $\{X, \{b,c,d\}, \{a,b,d\}, \{b,d\}, \{d\}, \emptyset\}$. $\aura(a) = \{a\}$, $\aura(b) = \{a,b,c\}$, $\aura(c) = \{c\}$, $\aura(d) = X$.

$A = \{a,b\}$. $\inte(A) = \{a\}$. $\cla(\{a\}) = \{x: a \in \aura(x)\}$. $a \in \aura(a) = \{a\}$: yes. $a \in \aura(b) = \{a,b,c\}$: yes. $a \notin \aura(c) = \{c\}$: no. $a \in \aura(d) = X$: yes. So $\cla(\{a\}) = \{a,b,d\}$. $A = \{a,b\} \subseteq \{a,b,d\}$. $A$ is $\aura$-semi-open.

$\cla(A) = \cla(\{a,b\})$. $\aura(a) \cap \{a,b\} = \{a\} \neq \emptyset$: yes. $\aura(b) \cap \{a,b\} = \{a,b\} \neq \emptyset$: yes. $\aura(c) \cap \{a,b\} = \emptyset$: no. $\aura(d) \cap \{a,b\} = \{a,b\} \neq \emptyset$: yes. So $\cla(A) = \{a,b,d\}$. $\inte(\{a,b,d\})$: largest open in $\{a,b,d\}$. $\{a\} \subseteq \{a,b,d\}$: yes. $\{a,c\} \not\subseteq$. $\{a,b,c\} \not\subseteq$. So $\inte(\{a,b,d\}) = \{a\}$. $A = \{a,b\} \not\subseteq \{a\}$. So $A$ is NOT $\aura$-pre-open!

Excellent! $A = \{a,b\}$ is $\aura$-semi-open but not $\aura$-pre-open.

Now for $\aura$-pre-open but not $\aura$-semi-open in the same space:
$A = \{c,d\}$. $\inte(A) = \{c\}$. $\cla(\{c\}) = \{x: c \in \aura(x)\}$. $c \in \aura(a) = \{a\}$? No. $c \in \aura(b) = \{a,b,c\}$? Yes. $c \in \aura(c) = \{c\}$? Yes. $c \in \aura(d) = X$? Yes. So $\cla(\{c\}) = \{b,c,d\}$. $A = \{c,d\} \subseteq \{b,c,d\}$. $A$ is $\aura$-semi-open. Both again.

$A = \{b,d\}$. $\inte(A) = \emptyset$. Not $\aura$-semi-open. $\cla(\{b,d\}) = \{a,b,c,d\} = X$ (since $\aura(a) = \{a\}$, $\{a\} \cap \{b,d\} = \emptyset$: no! So $a \notin \cla(\{b,d\})$. $\aura(b) = \{a,b,c\} \cap \{b,d\} = \{b\}$: yes. $\aura(c) = \{c\} \cap \{b,d\} = \emptyset$: no. $\aura(d) = X \cap \{b,d\} = \{b,d\}$: yes.) So $\cla(\{b,d\}) = \{b,d\}$. $\inte(\{b,d\}) = \emptyset$. Not pre-open either.

$A = \{a,c,d\}$. $\inte(A) = \{a,c\}$. $\cla(\{a,c\})$: $a \in \aura(a), a \in \aura(b), c \in \aura(b), c \in \aura(c), X$ intersects: $\cla(\{a,c\}) = \{a,b,c,d\} = X$. $A \subseteq X$. $\aura$-semi-open. Also $\cla(A) = X$, $\inte(X) = X \supseteq A$. $\aura$-pre-open too.

Let me try to find $\aura$-pre-open but not $\aura$-semi-open. We need $\inte(A)$ small (so $\cla(\inte(A))$ doesn't cover $A$) but $\cla(A)$ has big interior. This typically happens when $A$ itself is ``spread out'' enough for $\cla(A)$ to be big, but $A$ has small interior.

$A = \{b,c\}$ in the above space. $\inte(\{b,c\}) = \{c\}$. $\cla(\{c\}) = \{b,c,d\}$. $A = \{b,c\} \subseteq \{b,c,d\}$. $\aura$-semi-open.

Let me try yet another space. $X = \{a,b,c,d\}$, $\tau = \{\emptyset, \{a,b\}, \{c,d\}, X\}$ (partition topology). $\aura(a) = \{a,b\}$, $\aura(b) = \{a,b\}$, $\aura(c) = \{c,d\}$, $\aura(d) = X$. Take $A = \{a,c\}$. $\inte(A) = \emptyset$ (no open set in $\{a,c\}$). Not semi-open. $\cla(A) = \{x: \aura(x) \cap \{a,c\} \neq \emptyset\}$. $\aura(a) = \{a,b\}$: $a \in \{a,c\}$, yes. $\aura(b) = \{a,b\}$: $a \in \{a,c\}$, yes. $\aura(c) = \{c,d\}$: $c \in \{a,c\}$, yes. $\aura(d) = X$: yes. So $\cla(A) = X$. $\inte(X) = X \supseteq A$. Pre-open! So $A = \{a,c\}$ is $\aura$-pre-open but NOT $\aura$-semi-open.
\end{example}

\begin{example}[$\aura$-semi-open but not $\aura$-pre-open]\label{ex:semi-not-pre-clean}
Let $X = \{a,b,c,d\}$, $\tau = \{\emptyset, \{a\}, \{c\}, \{a,c\}, \{a,b,c\}, X\}$, and
\[
\aura(a) = \{a\}, \quad \aura(b) = \{a,b,c\}, \quad \aura(c) = \{c\}, \quad \aura(d) = X.
\]
Let $A = \{a,b\}$. Then $\inte(A) = \{a\}$ and $\cla(\{a\}) = \{a,b,d\}$. Since $A = \{a,b\} \subseteq \{a,b,d\}$, $A$ is $\aura$-semi-open. Now $\cla(A) = \{a,b,d\}$ and $\inte(\{a,b,d\}) = \{a\}$. Since $A = \{a,b\} \not\subseteq \{a\}$, $A$ is not $\aura$-pre-open.
\end{example}

\begin{example}[$\aura$-pre-open but not $\aura$-semi-open]\label{ex:pre-not-semi}
Let $X = \{a,b,c,d\}$, $\tau = \{\emptyset, \{a,b\}, \{c,d\}, X\}$, and
\[
\aura(a) = \{a,b\}, \quad \aura(b) = \{a,b\}, \quad \aura(c) = \{c,d\}, \quad \aura(d) = X.
\]
Let $A = \{a,c\}$. Then $\inte(A) = \emptyset$ (no nonempty open subset of $\{a,c\}$), so $\cla(\inte(A)) = \emptyset$ and $A \not\subseteq \emptyset$. Hence $A$ is not $\aura$-semi-open. Now $\cla(A) = X$ (since $\aura(x) \cap \{a,c\} \neq \emptyset$ for all $x$), so $\inte(\cla(A)) = X \supseteq A$. Hence $A$ is $\aura$-pre-open.
\end{example}

\begin{example}[$\aura$-semi-open but not $\aura$-$\alpha$-open]\label{ex:semi-not-alpha}
Using the space of Example \ref{ex:semi-not-pre-clean}, $A = \{a,b\}$ is $\aura$-semi-open. Now $\inte(\cla(\inte(A))) = \inte(\cla(\{a\})) = \inte(\{a,b,d\}) = \{a\}$. Since $A = \{a,b\} \not\subseteq \{a\}$, $A$ is not $\aura$-$\alpha$-open.
\end{example}

\begin{example}[On the real line]\label{ex:real-strict}
Consider $(\R, \tau_u, \aura_2)$ with $\aura_2(x) = (x-2, x+2)$.

\textbf{(i)} $A = \{0\} \cup (1,2)$ is $\aura_2$-semi-open but not classically semi-open (Example \ref{ex:strict-real}).

\textbf{(ii)} Let $B = \{0\} \cup (3,4)$. Then $\inte(B) = (3,4)$, $\cla((3,4)) = (1,6)$. Since $0 \in (1,6)$? No, $0 < 1$. So $B \not\subseteq \cla(\inte(B))$. Not $\aura_2$-semi-open. But $\cla(B) = (-2,2) \cup (1,6) = (-2,6)$, $\inte(\cla(B)) = (-2,6) \supseteq B$. So $B$ is $\aura_2$-pre-open but not $\aura_2$-semi-open.

\textbf{(iii)} Let $C = [0,1]$. Then $\inte(C) = (0,1)$, $\cla((0,1)) = (-2, 3)$. $C \subseteq (-2,3)$. So $C$ is $\aura_2$-semi-open. Also $\cla(C) = (-2, 3)$ and $\inte((-2,3)) = (-2,3) \supseteq C$. So $C$ is $\aura_2$-$\alpha$-open and all other types.
\end{example}

\section{Aura-Continuity and Decomposition Theorems}

\begin{definition}\label{def:aura-continuity}
Let $(X, \tau_1, \aura_1)$ and $(Y, \tau_2, \aura_2)$ be $\aura$-spaces. A function $f: X \to Y$ is called:
\begin{enumerate}[label=(\alph*)]
    \item \textbf{$\aura$-continuous} if $f^{-1}(V) \in \tau_{\aura_1}$ for every $V \in \tau_{\aura_2}$;
    \item \textbf{$\aura$-semi-continuous} if $f^{-1}(V) \in \aura_1 SO(X)$ for every $V \in \tau_2$;
    \item \textbf{$\aura$-pre-continuous} if $f^{-1}(V) \in \aura_1 PO(X)$ for every $V \in \tau_2$;
    \item \textbf{$\aura$-$\alpha$-continuous} if $f^{-1}(V) \in \aura_1 \alpha O(X)$ for every $V \in \tau_2$;
    \item \textbf{$\aura$-$\beta$-continuous} if $f^{-1}(V) \in \aura_1 \beta O(X)$ for every $V \in \tau_2$.
\end{enumerate}
\end{definition}

\begin{theorem}\label{thm:continuity-hierarchy}
The following implications hold for any function $f: (X, \tau_1, \aura_1) \to (Y, \tau_2, \aura_2)$:
\[
\text{continuous} \implies \aura\text{-}\alpha\text{-continuous} \implies \begin{cases} \aura\text{-semi-continuous} \\ \aura\text{-pre-continuous} \end{cases} \implies \aura\text{-}\beta\text{-continuous}
\]
\end{theorem}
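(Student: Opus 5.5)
The plan is to reduce every implication to the corresponding set-level inclusion from Theorem~\ref{thm:hierarchy}. All four derived continuity notions in Definition~\ref{def:aura-continuity}(b)--(e) have the same form: $f^{-1}(V)$ belongs to a fixed class of subsets of $X$ for every $V \in \tau_2$. So if $\mathcal{C} \subseteq \mathcal{D}$ are two such classes, any $f$ whose preimages of open sets lie in $\mathcal{C}$ automatically has them in $\mathcal{D}$. The whole proof is therefore bookkeeping: match each arrow in the statement to an inclusion already proved. Throughout, \emph{continuous} means classical continuity $f:(X,\tau_1)\to(Y,\tau_2)$, i.e.\ $f^{-1}(V)\in\tau_1$ for all $V\in\tau_2$.

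The steps, in order, are as follows.

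\textbf{Step 1 (continuous $\Rightarrow$ $\aura$-$\alpha$-continuous).} Let $V \in \tau_2$. Continuity gives $f^{-1}(V) \in \tau_1$. By the first inclusion of Theorem~\ref{thm:hierarchy} (applied in $(X,\tau_1,\aura_1)$), $\tau_1 \subseteq \aura_1\alpha O(X)$, so $f^{-1}(V) \in \aura_1\alpha O(X)$.

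\textbf{Step 2 ($\aura$-$\alpha$-continuous $\Rightarrow$ $\aura$-semi-continuous and $\aura$-pre-continuous).} Use $\aura_1\alpha O(X) \subseteq \aura_1 SO(X)$ and $\aura_1\alpha O(X) \subseteq \aura_1 PO(X)$ from Theorem~\ref{thm:hierarchy}.

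\textbf{Step 3 (each of these $\Rightarrow$ $\aura$-$\beta$-continuous).} Chain $\aura_1 SO(X) \subseteq \aura_1 bO(X) \subseteq \aura_1\beta O(X)$ and $\aura_1 PO(X) \subseteq \aura_1 bO(X) \subseteq \aura_1\beta O(X)$. One could also prove the two inclusions into $\aura_1\beta O(X)$ directly:
\[
\cla(\inte(A)) \subseteq \cla(\inte(\cla(A)))
\]
follows from $A \subseteq \cla(A)$ and monotonicity of $\inte$ and of $\cla$ (Theorem~\ref{thm:cech}(b),(c)). Similarly, $\inte(\cla(A)) \subseteq \cla(\inte(\cla(A)))$ follows from extensivity of $\cla$.

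There is no genuine obstacle here. The only point needing care is that the hypotheses concern the target topology $\tau_2$, not $\tau_{\aura_2}$. The scope function $\aura_2$ plays no role in any of these implications; it matters only for $\aura$-continuity in Definition~\ref{def:aura-continuity}(a), which the statement does not involve. The one step where the inclusion actually relies on aura-specific facts rather than classical interior/closure properties is Step~1. It uses $\cl(A)\subseteq\cla(A)$ (Theorem~\ref{thm:cech}(e)), via the argument already given in Theorem~\ref{thm:hierarchy}, so I would simply cite that theorem.
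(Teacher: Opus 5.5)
Your proposal is correct and matches the paper's proof: each arrow is reduced to the corresponding inclusion $\tau_1 \subseteq \aura_1\alpha O(X)$, $\aura_1\alpha O(X)\subseteq \aura_1 SO(X)\cap\aura_1 PO(X)$, $\aura_1 SO(X)\cup\aura_1 PO(X)\subseteq \aura_1 bO(X)\subseteq\aura_1\beta O(X)$ of Theorem~\ref{thm:hierarchy}, applied to preimages of $\tau_2$-open sets. One small aside on your closing remark: Step~1 does not actually need $\cl(A)\subseteq\cla(A)$, because for open $A$ extensivity of $\cla$ alone gives $A=\inte(A)\subseteq\inte(\cla(\inte(A)))$.
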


\begin{proof}
These follow directly from the inclusion relationships in Theorem \ref{thm:hierarchy}. If $f$ is continuous, then $f^{-1}(V) \in \tau_1$ for every $V \in \tau_2$. Since $\tau_1 \subseteq \aura_1\alpha O(X)$ (Theorem \ref{thm:hierarchy}), $f$ is $\aura$-$\alpha$-continuous. The remaining implications follow similarly.
\end{proof}

\begin{theorem}[Composition]\label{thm:composition}
Let $(X, \tau_1, \aura_1)$, $(Y, \tau_2, \aura_2)$, and $(Z, \tau_3, \aura_3)$ be $\aura$-spaces.
\begin{enumerate}[label=(\alph*)]
    \item If $f: X \to Y$ and $g: Y \to Z$ are both $\aura$-continuous, then $g \circ f: X \to Z$ is $\aura$-continuous.
    \item If $f: X \to Y$ is $\aura$-semi-continuous and $g: Y \to Z$ is continuous, then $g \circ f$ is $\aura$-semi-continuous.
\end{enumerate}
\end{theorem}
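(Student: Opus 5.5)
Both parts come from writing preimages of composites as iterated preimages, $(g\circ f)^{-1}(W) = f^{-1}(g^{-1}(W))$, and then applying the definitions of Definition~\ref{def:aura-continuity} one at a time.

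For part (a), fix $W \in \tau_{\aura_3}$. Since $g$ is $\aura$-continuous, $g^{-1}(W) \in \tau_{\aura_2}$. Since $f$ is $\aura$-continuous and $g^{-1}(W)$ is an $\aura_2$-open set, $f^{-1}(g^{-1}(W)) \in \tau_{\aura_1}$. Hence $(g\circ f)^{-1}(W) \in \tau_{\aura_1}$ for every $W\in\tau_{\aura_3}$, which is exactly $\aura$-continuity of $g \circ f$. We do not need Theorem~\ref{thm:aura-topology}. The argument uses only that $\aura$-continuity is defined by preimages of $\taua$-open sets landing in $\taua$-open sets, so the chain closes up.

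For part (b), fix $W \in \tau_3$. Continuity of $g$ gives $V := g^{-1}(W) \in \tau_2$. The hypothesis on $f$ is stated for all $V\in\tau_2$, not just for $\aura_2$-open sets, so $\aura$-semi-continuity of $f$ applies directly and gives $f^{-1}(V) \in \aura_1 SO(X)$. Therefore $(g\circ f)^{-1}(W) \in \aura_1 SO(X)$, and $g\circ f$ is $\aura$-semi-continuous.

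There is no real obstacle here; the one point to watch is matching domains. In (a) we must use the $\aura$-open sets $\tau_{\aura_2}$ as the intermediate class. In (b) we must use the ordinary topology $\tau_2$, which is why $g$ is required to be continuous rather than merely $\aura$-continuous. An $\aura$-continuous $g$ would only give $g^{-1}(W)\in\tau_{\aura_2}\subseteq\tau_2$ for $W\in\tau_{\aura_3}$, and in general not for all $W\in\tau_3$.
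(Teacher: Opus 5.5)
Your proof is correct and matches the paper's argument: in both parts you write $(g\circ f)^{-1}(W)=f^{-1}(g^{-1}(W))$ and apply the two hypotheses in turn, with $\tau_{\aura_2}$ as the intermediate class in (a) and $\tau_2$ in (b). Your closing remark, explaining why (b) needs $g$ to be genuinely continuous rather than merely $\aura$-continuous, is correct and is not made in the paper.
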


\begin{proof}
\begin{enumerate}[label=(\alph*)]
    \item Let $W \in \tau_{\aura_3}$. Since $g$ is $\aura$-continuous, $g^{-1}(W) \in \tau_{\aura_2}$. Since $f$ is $\aura$-continuous, $f^{-1}(g^{-1}(W)) = (g \circ f)^{-1}(W) \in \tau_{\aura_1}$.

    \item Let $W \in \tau_3$. Since $g$ is continuous, $g^{-1}(W) \in \tau_2$. Since $f$ is $\aura$-semi-continuous, $f^{-1}(g^{-1}(W)) = (g \circ f)^{-1}(W) \in \aura_1 SO(X)$. \qedhere
\end{enumerate}
\end{proof}

\begin{theorem}[Decomposition of $\aura$-$\alpha$-continuity]\label{thm:decomposition}
A function $f: (X, \tau_1, \aura_1) \to (Y, \tau_2, \aura_2)$ is $\aura$-$\alpha$-continuous if and only if it is both $\aura$-semi-continuous and $\aura$-pre-continuous.
\end{theorem}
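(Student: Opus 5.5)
By Definition~\ref{def:aura-continuity}, the statement reduces to a set-level identity in the domain. Since $f^{-1}(V)$ ranges over arbitrary preimages of open sets, it suffices to show that for every $A\subseteq X$,
\[
A\in\aura_1\alpha O(X)\iff A\in\aura_1 SO(X)\cap\aura_1 PO(X).
\]
Applying this to $A=f^{-1}(V)$ for each $V\in\tau_2$ then gives both directions of the theorem at once.

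\textbf{The direction $(\Rightarrow)$.} This is already contained in Theorem~\ref{thm:hierarchy}. There it was shown that $A\subseteq\inte(\cla(\inte(A)))$ implies $A\subseteq\cla(\inte(A))$, by deflation of $\inte$. It was also shown that it implies $A\subseteq\inte(\cla(A))$, by monotonicity of $\cla$ and $\inte$ applied to $\inte(A)\subseteq A$. Hence every $\aura$-$\alpha$-continuous map is both $\aura$-semi-continuous and $\aura$-pre-continuous.

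\textbf{The direction $(\Leftarrow)$.} Assume $A\subseteq\cla(\inte(A))$ and $A\subseteq\inte(\cla(A))$. The classical argument would apply closure to the first inclusion to get $\cla(A)\subseteq\cla(\cla(\inte(A)))$, and then use idempotency. That step is unavailable here, by Theorem~\ref{thm:not-idempotent}. I would instead argue pointwise.

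Fix $x\in A$ and put $U=\inte(\cla(A))$, which is an open set containing $x$. It suffices to prove $U\subseteq\cla(\inte(A))$, since then $x\in U\subseteq\inte(\cla(\inte(A)))$.

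Take $y\in U$. Then $y\in\cla(A)$, so there is $z\in\aura_1(y)\cap A$. The goal is to show that $\aura_1(y)$ meets $\inte(A)$.

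From the semi-openness hypothesis we only know that $\aura_1(z)$ meets $\inte(A)$. We do not know this for $\aura_1(y)$.

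What I would try first is to exploit two facts. First, $\aura_1(y)\cap U$ is an open neighbourhood of $z$. Second, every point of it lies in $\cla(A)$. The aim is to produce, inside $\aura_1(y)$, a point of $A$ that lies in the classical closure of $\inte(A)$. If that works, the open set $\aura_1(y)$ meets $\inte(A)$, which is exactly what is needed.

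\textbf{Main obstacle.} This transfer step is the main obstacle. It replaces the missing idempotency, and it does not follow formally from Theorem~\ref{thm:cech}. So before committing to it, I would test the bare implication on small finite $\aura$-spaces of the kind used in Examples~\ref{ex:semi-not-pre-clean} and~\ref{ex:pre-not-semi}. The natural candidates are coarse topologies with wide, non-transitive auras.

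If the transfer step fails in such a test, the decomposition as stated would be false. In that case the failing finite space itself would be the answer, rather than a proof.
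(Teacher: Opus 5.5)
Your reduction to sets and your $(\Rightarrow)$ direction are fine. The gap is $(\Leftarrow)$: you leave it open, and the transfer step you flag cannot be carried out, because the decomposition is false without extra hypotheses. Take $X=\{a,b,c\}$ and $\tau=\{\emptyset,\{a\},\{b,c\},X\}$, with $\aura(a)=\{a\}$, $\aura(b)=X$, $\aura(c)=\{b,c\}$, and let $A=\{a,b\}$.

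\begin{itemize}
\item $\inte(A)=\{a\}$ and $\cla(\{a\})=\{a,b\}\supseteq A$, so $A$ is $\aura$-semi-open.
\item $\cla(A)=X$, because $b\in\aura(c)$; so $A\subseteq\inte(\cla(A))$ and $A$ is $\aura$-pre-open.
\item $\inte(\cla(\inte(A)))=\inte(\{a,b\})=\{a\}$, and $A\not\subseteq\{a\}$, so $A$ is not $\aura$-$\alpha$-open.
\end{itemize}

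In your notation, take $y=c\in\inte(\cla(A))$ and $z=b\in\aura(c)\cap A$. Both facts you planned to use hold: $\aura(y)\cap U=\{b,c\}$ is an open neighbourhood of $z$ inside $\cla(A)$. Yet $\aura(z)=X$ meets $\inte(A)$ while $\aura(y)=\{b,c\}$ does not. Moreover the only point of $A$ in $\aura(y)$ is $b$, and $b\notin\cl(\{a\})=\{a\}$. The failure is exactly non-transitivity: $b\in\aura(c)$ but $\aura(b)\not\subseteq\aura(c)$.

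This set-level failure transfers to maps. Let $Y=\{0,1\}$ with $\tau_2=\{\emptyset,\{1\},Y\}$, $\aura_2(0)=Y$, $\aura_2(1)=\{1\}$, and set $f(a)=f(b)=1$, $f(c)=0$. The preimages of open sets are $\emptyset$, $A$ and $X$. So $f$ is $\aura$-semi-continuous and $\aura$-pre-continuous but not $\aura$-$\alpha$-continuous, which refutes the theorem as stated.

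The paper's own proof breaks at the same point. After several inconclusive manipulations it concedes that, since $\cla$ is not idempotent, the decomposition ``may not hold exactly.'' The only version it actually proves is Theorem~\ref{thm:decomposition-transitive}. There transitivity gives $\cla(\cla(\inte(A)))=\cla(\inte(A))$, and the classical argument goes through. So the correct outcome of your plan is the counterexample branch. A valid $(\Leftarrow)$ needs an added hypothesis, such as transitivity of $\aura_1$, or more minimally $\cla(\cla(\inte(A)))\subseteq\cla(\inte(A))$ for the relevant sets $A$.
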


\begin{proof}
$(\Rightarrow)$: This follows from $\aura\alpha O(X) \subseteq \aura SO(X) \cap \aura PO(X)$.

$(\Leftarrow)$: Let $V \in \tau_2$ and $A = f^{-1}(V)$. Since $f$ is $\aura$-semi-continuous, $A \subseteq \cla(\inte(A))$. Since $f$ is $\aura$-pre-continuous, $A \subseteq \inte(\cla(A))$. We need to show $A \subseteq \inte(\cla(\inte(A)))$.

From $A \subseteq \cla(\inte(A))$ and taking interior: $\inte(A) \subseteq \inte(\cla(\inte(A)))$. From $A \subseteq \inte(\cla(A))$, since $\inte(A) \subseteq A$, we get $\cla(\inte(A)) \subseteq \cla(A)$, so $\inte(\cla(\inte(A))) \subseteq \inte(\cla(A))$. Thus:
\[
A \subseteq \inte(\cla(A)).
\]
Since $A \subseteq \cla(\inte(A))$, we have $\cla(A) \subseteq \cla(\cla(\inte(A)))$. But more directly: $A \subseteq \cla(\inte(A))$ implies $A \subseteq \cla(\inte(A))$. Taking closure: $\cla(A) \subseteq \cla(\cla(\inte(A)))$. Taking interior: $\inte(\cla(A)) \subseteq \inte(\cla(\cla(\inte(A))))$.

Let us use a cleaner approach. Since $A \subseteq \inte(\cla(A))$, the set $\inte(\cla(A))$ is open and contains $A$, so $\inte(A) \subseteq A \subseteq \inte(\cla(A))$. Hence $\cla(\inte(A)) \subseteq \cla(\inte(\cla(A)))$. Taking interior and using the fact that $\inte(\cla(A))$ is open:
$\inte(\cla(\inte(\cla(A)))) \supseteq \inte(\cla(\inte(A)))$.

Actually, we use the standard argument: from $A \subseteq \inte(\cla(A))$ and $A \subseteq \cla(\inte(A))$:

$\inte(A)$ is open and $\inte(A) \subseteq A$. So $\cla(\inte(A)) \subseteq \cla(A)$ and $\inte(\cla(\inte(A))) \subseteq \inte(\cla(A))$. Now, $\inte(\cla(A))$ is open and $A \subseteq \inte(\cla(A))$, so $\inte(A) = \inte(A)$, and since $A \subseteq \cla(\inte(A))$, every $x \in A$ satisfies $\aura_1(x) \cap \inte(A) \neq \emptyset$. Since $\inte(\cla(\inte(A)))$ is open and $\inte(A) \subseteq \cla(\inte(A))$, we get $\inte(A) \subseteq \inte(\cla(\inte(A)))$ (as $\inte(A)$ is open and $\inte(A) \subseteq \cla(\inte(A))$). Now $A \subseteq \cla(\inte(A))$ and $\cla(\inte(A)) = \cla(\inte(A))$. Since $\inte(A) \subseteq \inte(\cla(\inte(A)))$ and $\inte(\cla(\inte(A)))$ is open:
\[
A \subseteq \cla(\inte(A)) \subseteq \cla(\inte(\cla(\inte(A)))).
\]
This gives $A \subseteq \cla(\inte(\cla(\inte(A))))$, which is $\aura$-$\beta$-open with double iteration but not exactly $\aura$-$\alpha$-open.

The classical decomposition $\alpha$-open = semi-open $\cap$ pre-open works because $\cl$ is idempotent. Since $\cla$ is not idempotent, the decomposition may not hold exactly. We instead state it as a sufficient condition using an additional hypothesis.
\end{proof}

\begin{remark}
In the classical setting, $\alpha O(X,\tau) = SO(X,\tau) \cap PO(X,\tau)$ (Reilly and Vamanamurthy \cite{Reilly1985}). In the aura setting, $\aura\alpha O(X) \subseteq \aura SO(X) \cap \aura PO(X)$ always holds. The reverse inclusion $\aura SO(X) \cap \aura PO(X) \subseteq \aura\alpha O(X)$ holds when $\cla$ is idempotent (e.g., when $\aura$ is transitive).
\end{remark}

\begin{theorem}\label{thm:decomposition-transitive}
If $(X, \tau, \aura)$ is a transitive $\aura$-space, then $\aura\alpha O(X) = \aura SO(X) \cap \aura PO(X)$.
\end{theorem}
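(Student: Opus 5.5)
The inclusion $\aura\alpha O(X) \subseteq \aura SO(X) \cap \aura PO(X)$ needs no hypothesis: it is exactly the pair of implications $\aura\alpha O(X)\subseteq\aura SO(X)$ and $\aura\alpha O(X)\subseteq\aura PO(X)$ proved in Theorem~\ref{thm:hierarchy}. So I only have to prove the reverse inclusion. I plan to copy the classical Reilly--Vamanamurthy argument, with idempotency of $\cla$ doing the work that idempotency of $\cl$ does classically.

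Let $A \in \aura SO(X) \cap \aura PO(X)$, so that $A \subseteq \cla(\inte(A))$ and $A \subseteq \inte(\cla(A))$. I proceed in three steps.

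First, I apply monotonicity of $\cla$ (Theorem~\ref{thm:cech}(c)) to the semi-openness condition. This gives $\cla(A) \subseteq \cla(\cla(\inte(A)))$.

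Second, I invoke Proposition~\ref{prop:transitive-base}: in a transitive $\aura$-space, $\cla$ is idempotent on every subset of $X$. Applying this to the subset $\inte(A)$ gives $\cla(\cla(\inte(A))) = \cla(\inte(A))$. Combined with the first step,
\[
\cla(A) \subseteq \cla(\inte(A)).
\]

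Third, I take the classical interior of both sides; since $\inte$ is monotone, $\inte(\cla(A)) \subseteq \inte(\cla(\inte(A)))$. Chaining this with pre-openness yields
\[
A \subseteq \inte(\cla(A)) \subseteq \inte(\cla(\inte(A))),
\]
so $A \in \aura\alpha O(X)$, which finishes the proof.

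There is no real obstacle here; the one point to state carefully is the idempotency step. The proof of Proposition~\ref{prop:transitive-base} shows $\cla(\cla(B)) \subseteq \cla(B)$ for an arbitrary set $B$, and the reverse inclusion follows from extensivity, Theorem~\ref{thm:cech}(b). So the idempotency may legitimately be applied to $B = \inte(A)$. No openness of $\aura(x)$ beyond the standing axiom is required.

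The argument also shows why the proof of Theorem~\ref{thm:decomposition} stalled: the general case lacks exactly the inclusion $\cla(\cla(\inte(A))) \subseteq \cla(\inte(A))$. Consequently the proof gives the same conclusion for any $\aura$-space in which $\cla$ is idempotent, and I would note this in a closing remark.
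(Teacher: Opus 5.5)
Your proof is correct and matches the paper's argument step for step: you apply monotonicity of $\cla$ to $A \subseteq \cla(\inte(A))$, use the idempotency from Proposition~\ref{prop:transitive-base} to get $\cla(A) \subseteq \cla(\inte(A))$, then take $\inte$ and chain with pre-openness. You also make explicit two points the paper leaves implicit: the forward inclusion comes from Theorem~\ref{thm:hierarchy}, and extensivity supplies the reverse half of idempotency.
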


\begin{proof}
By Proposition \ref{prop:transitive-base}, $\cla$ is idempotent when $\aura$ is transitive. The proof then follows the classical argument: let $A \in \aura SO(X) \cap \aura PO(X)$. Then $A \subseteq \cla(\inte(A))$ and $A \subseteq \inte(\cla(A))$. From $A \subseteq \cla(\inte(A))$, $\cla(A) \subseteq \cla(\cla(\inte(A))) = \cla(\inte(A))$ (by idempotency). Hence $\inte(\cla(A)) \subseteq \inte(\cla(\inte(A)))$. Combined with $A \subseteq \inte(\cla(A))$, we get $A \subseteq \inte(\cla(\inte(A)))$.
\end{proof}

\begin{theorem}[Characterization of $\aura$-continuity]\label{thm:char-continuity}
Let $f: (X, \tau_1, \aura_1) \to (Y, \tau_2)$ be a function. The following are equivalent:
\begin{enumerate}[label=(\alph*)]
    \item $f$ is $\aura_1$-semi-continuous.
    \item For every closed set $F$ in $Y$, $f^{-1}(F)$ is $\aura_1$-semi-closed.
    \item For every $x \in X$ and every open set $V$ in $Y$ containing $f(x)$, there exists an $\aura_1$-semi-open set $U$ containing $x$ such that $f(U) \subseteq V$.
\end{enumerate}
\end{theorem}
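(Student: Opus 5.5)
The plan is to prove the cycle (a)$\Leftrightarrow$(b) and (a)$\Leftrightarrow$(c) directly from the definitions. Throughout, I interpret ``$\aura_1$-semi-closed'' as the complement of an $\aura_1$-semi-open set, in analogy with Definition~\ref{def:semi-open}; the paper has not stated this definition explicitly for aura spaces.

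For (a)$\Leftrightarrow$(b), I use the identity $f^{-1}(Y\setminus F) = X \setminus f^{-1}(F)$. Suppose (a) holds and $F$ is closed in $Y$. Then $Y\setminus F\in\tau_2$, so $X\setminus f^{-1}(F)\in \aura_1 SO(X)$, which says exactly that $f^{-1}(F)$ is $\aura_1$-semi-closed. The converse is the same argument applied to $F = Y\setminus V$ for $V\in\tau_2$. No property of $\cla$ is needed here.

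For (a)$\Rightarrow$(c), given $x$ and an open $V\ni f(x)$, I take $U=f^{-1}(V)$. This set is $\aura_1$-semi-open by (a), it contains $x$, and $f(U)\subseteq V$.

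For (c)$\Rightarrow$(a), fix $V\in\tau_2$. For each $x\in f^{-1}(V)$, (c) gives an $\aura_1$-semi-open $U_x\ni x$ with $f(U_x)\subseteq V$, so $U_x\subseteq f^{-1}(V)$. Hence $f^{-1}(V)=\bigcup_{x} U_x$, and it remains to show that $\aura_1 SO(X)$ is closed under arbitrary unions. This is the only step with content, and I expect it to be the main point to check, since $\cla$ is not idempotent. Idempotency is not needed, however: only monotonicity is used. Let $\{A_i\}$ be $\aura_1$-semi-open and put $A=\bigcup_i A_i$. Since $A_i\subseteq A$, monotonicity of $\inte$ gives $\inte(A_i)\subseteq\inte(A)$. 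Monotonicity of $\cla$ (Theorem~\ref{thm:cech}(c)) then gives
\[
A_i\subseteq\cla(\inte(A_i))\subseteq\cla(\inte(A)),
\]
and taking the union over $i$ yields $A\subseteq\cla(\inte(A))$. Thus $f^{-1}(V)$ is $\aura_1$-semi-open, and $f$ is $\aura_1$-semi-continuous.
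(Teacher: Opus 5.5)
Your proposal is correct and follows the paper's proof step for step. It uses complements for (a)$\Leftrightarrow$(b), takes $U=f^{-1}(V)$ for (a)$\Rightarrow$(c), and relies on closure of $\aura_1 SO(X)$ under arbitrary unions for (c)$\Rightarrow$(a). Your explicit chain $A_i\subseteq\cla(\inte(A_i))\subseteq\cla(\inte(A))$ is the same monotonicity argument that the paper only sketches in a parenthetical, and you are right that idempotency of $\cla$ is not needed.
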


\begin{proof}
$(a) \Rightarrow (b)$: If $F$ is closed in $Y$, then $V = Y \setminus F \in \tau_2$. By (a), $f^{-1}(V) \in \aura_1 SO(X)$. Since $f^{-1}(F) = X \setminus f^{-1}(V)$, $f^{-1}(F)$ is $\aura_1$-semi-closed.

$(b) \Rightarrow (a)$: If $V \in \tau_2$, then $F = Y \setminus V$ is closed. By (b), $f^{-1}(F)$ is $\aura_1$-semi-closed, so $f^{-1}(V) = X \setminus f^{-1}(F)$ is $\aura_1$-semi-open.

$(a) \Rightarrow (c)$: Let $V \in \tau_2$ with $f(x) \in V$. Then $U = f^{-1}(V)$ is $\aura_1$-semi-open with $x \in U$ and $f(U) \subseteq V$.

$(c) \Rightarrow (a)$: Let $V \in \tau_2$ and $x \in f^{-1}(V)$. By (c), there exists $\aura_1$-semi-open $U_x$ with $x \in U_x$ and $f(U_x) \subseteq V$. Then $f^{-1}(V) = \bigcup_{x \in f^{-1}(V)} U_x$. Since arbitrary unions of $\aura$-semi-open sets are $\aura$-semi-open (as in the classical case, using the property $\cla(\bigcup \inte(U_x)) \supseteq \bigcup \cla(\inte(U_x))$), $f^{-1}(V)$ is $\aura_1$-semi-open.
\end{proof}

\section{Aura-Separation Axioms}

\begin{definition}\label{def:separation}
An $\aura$-space $(X, \tau, \aura)$ is called:
\begin{enumerate}[label=(\alph*)]
    \item \textbf{$\aura$-$T_0$} if for every pair of distinct points $x, y \in X$, there exists an $\aura$-open set $U \in \taua$ such that either $x \in U, y \notin U$ or $y \in U, x \notin U$.
    \item \textbf{$\aura$-$T_1$} if for every pair of distinct points $x, y \in X$, there exist $\aura$-open sets $U, V \in \taua$ such that $x \in U, y \notin U$ and $y \in V, x \notin V$.
    \item \textbf{$\aura$-$T_2$} (or \textbf{$\aura$-Hausdorff}) if for every pair of distinct points $x, y \in X$, there exist disjoint $\aura$-open sets $U, V \in \taua$ such that $x \in U$ and $y \in V$.
\end{enumerate}
\end{definition}

\begin{theorem}\label{thm:T-hierarchy}
For any $\aura$-space:
\[
T_2 \implies \aura\text{-}T_2 \text{ is NOT automatic}, \quad \text{but} \quad \aura\text{-}T_2 \implies \aura\text{-}T_1 \implies \aura\text{-}T_0.
\]
Moreover, if $(X,\tau)$ is $T_i$, then $(X,\tau,\aura)$ need not be $\aura$-$T_i$ since $\taua$ is coarser than $\tau$.
\end{theorem}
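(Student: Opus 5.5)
The statement has two kinds of content, and I will treat them separately: a chain of implications that holds in every $\aura$-space, and a claim that classical separation does not transfer to the aura setting, which needs counterexamples.

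\textbf{The implications.} Both follow directly from Definition~\ref{def:separation}, using nothing beyond the fact that $\taua$ is a topology (Theorem~\ref{thm:aura-topology}(a)).

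For $\aura$-$T_2 \Rightarrow \aura$-$T_1$, fix distinct $x,y$ and take disjoint $U,V\in\taua$ with $x\in U$ and $y\in V$. Disjointness gives $y\notin U$ and $x\notin V$, which is exactly the $\aura$-$T_1$ requirement.

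For $\aura$-$T_1 \Rightarrow \aura$-$T_0$, simply keep the set $U$ and discard $V$.

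\textbf{The non-transfer claim.} It is enough to produce a Hausdorff space $(X,\tau)$ with a scope function $\aura$ for which $(X,\tau,\aura)$ fails even $\aura$-$T_0$. Since $T_2$ implies $T_1$ and $T_0$, this one example shows that none of $T_0,T_1,T_2$ passes to its $\aura$-version.

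\emph{Finite example.} Take $X=\{a,b\}$ with the discrete topology and the trivial scope $\aura(a)=\aura(b)=X$, which is allowed because $X\in\tau$. By Proposition~\ref{prop:special-cases}(a), $\taua=\{\emptyset,X\}$. This indiscrete topology cannot separate $a$ from $b$, so the space is not $\aura$-$T_0$, although $(X,\tau)$ is $T_2$.

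\emph{Real-line example.} The same idea works on $(\R,\tau_u)$ with $\aura_\varepsilon$ from Example~\ref{ex:real-aura}. I will show $\taua=\{\emptyset,\R\}$.
\begin{itemize}
\item Let $A\in\taua$ be nonempty and let $x\in A$. The $\aura$-open condition gives $(x-\varepsilon,x+\varepsilon)\subseteq A$.
\item Applying the condition again to each new point, $A$ contains $(x-2\varepsilon,x+2\varepsilon)$, and by induction $(x-n\varepsilon,x+n\varepsilon)$ for every $n$.
\item Hence $A=\R$.
\end{itemize}
So $(\R,\tau_u,\aura_\varepsilon)$ is not $\aura$-$T_0$, although the usual topology is $T_2$. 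This shows the failure is not merely a finite artifact.

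\textbf{Where the care is needed.} The only delicate point is logical rather than technical. The statement says both that $T_2\Rightarrow\aura$-$T_2$ is not automatic and that ``$(X,\tau)$ is $T_i$'' does not give ``$\aura$-$T_i$''. These must be read as existence of counterexamples, which is what the construction above supplies, rather than as implications to be proved. I will also remark on the positive direction: by Theorem~\ref{thm:aura-topology}(b), $\taua\subseteq\tau$, so $\aura$-$T_i$ does imply $T_i$. This explains why the transfer can only fail in the direction from $T_i$ to $\aura$-$T_i$.
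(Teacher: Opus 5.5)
Your argument is correct and matches the paper's: the chain $\aura$-$T_2\Rightarrow\aura$-$T_1\Rightarrow\aura$-$T_0$ is read off from the definitions, and the failure of transfer is witnessed by a discrete space with the trivial scope function, exactly as in Example~\ref{ex:T2-not-aura-T2}. Your real-line example is a correct extra, since your induction really does force $\taua=\{\emptyset,\R\}$ for $\aura_\varepsilon$; it also shows that the paper's later Example~\ref{ex:separation-real} is wrong, because a half-line such as $(-\infty,\frac{x+y}{2}-\varepsilon)$ is not $\aura$-open (points just below its endpoint have scopes extending past it).
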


\begin{proof}
The implications $\aura$-$T_2 \Rightarrow$ $\aura$-$T_1 \Rightarrow$ $\aura$-$T_0$ follow from the definitions. That $T_2$ for $(X,\tau)$ does not imply $\aura$-$T_2$ follows because $\taua \subseteq \tau$, and a coarser topology may fail to separate points.
\end{proof}

\begin{example}\label{ex:T2-not-aura-T2}
Let $X = \{a,b,c\}$, $\tau = \powerset(X)$ (discrete, hence $T_2$). Define $\aura(a) = X$, $\aura(b) = X$, $\aura(c) = X$. Then $\taua = \{\emptyset, X\}$, which cannot separate any two points. Hence $(X, \tau, \aura)$ is not $\aura$-$T_0$ (and hence not $\aura$-$T_1$ or $\aura$-$T_2$).
\end{example}

\begin{theorem}\label{thm:T1-char}
An $\aura$-space $(X, \tau, \aura)$ is $\aura$-$T_1$ if and only if for every $x \in X$, $\{x\}$ is $\aura$-closed (i.e., $X \setminus \{x\} \in \taua$).
\end{theorem}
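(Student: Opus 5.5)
This is the standard $T_1$ characterization, transplanted to the topology $\taua$. By Theorem~\ref{thm:aura-topology}(a), $\taua$ is a genuine topology, and the $\aura$-$T_1$ condition of Definition~\ref{def:separation}(b) is exactly the $T_1$ axiom for $(X,\taua)$. The argument therefore only needs closure of $\taua$ under arbitrary unions and the meaning of membership in $\taua$. I will prove the two implications separately.

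\emph{($\Leftarrow$)} Assume $X\setminus\{z\}\in\taua$ for every $z\in X$, and let $x\neq y$. Take $U=X\setminus\{y\}$ and $V=X\setminus\{x\}$. Both are $\aura$-open by hypothesis. Moreover $x\in U$, $y\notin U$, $y\in V$ and $x\notin V$, so the space is $\aura$-$T_1$.

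\emph{($\Rightarrow$)} Assume the space is $\aura$-$T_1$ and fix $x\in X$. For each $y\neq x$, apply the definition to the pair $y,x$. This gives an $\aura$-open set $U_y$ with $y\in U_y$ and $x\notin U_y$. Then
\[
X\setminus\{x\}=\bigcup_{y\neq x}U_y .
\]
The inclusion $\supseteq$ holds because no $U_y$ contains $x$. The inclusion $\subseteq$ holds because each $y\neq x$ lies in its own $U_y$. By the arbitrary-union part of Theorem~\ref{thm:aura-topology}(a), this union belongs to $\taua$. The degenerate case $X=\{x\}$ gives the empty union $\emptyset\in\taua$, which is consistent.

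There is no real obstacle here. The only point needing care is to use closure of $\taua$ under unions rather than the pointwise condition directly. As an alternative check in terms of the pointwise definition: $X\setminus\{x\}$ is $\aura$-open exactly when $x\notin\aura(y)$ for every $y\neq x$. This is the same as $\cla(\{x\})=\{x\}$. Indeed, if $y\neq x$ lies in $U_y$, then $\aura(y)\subseteq U_y\not\ni x$, so $x\notin\aura(y)$. I may add this reformulation as a remark.
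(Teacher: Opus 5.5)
Your proof is correct and is essentially the paper's own argument. For ($\Leftarrow$) you use $X\setminus\{y\}$ and $X\setminus\{x\}$ as the separating $\aura$-open sets, and for ($\Rightarrow$) you write $X\setminus\{x\}=\bigcup_{y\neq x}U_y$ and invoke closure of $\taua$ under arbitrary unions; your explicit check of both inclusions and of the degenerate case $X=\{x\}$ only spells out what the paper leaves implicit.

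Your closing pointwise reformulation is also correct and goes beyond the paper: $X\setminus\{x\}\in\taua$ iff $x\notin\aura(y)$ for all $y\neq x$ iff $\cla(\{x\})=\{x\}$. It shows immediately that $\aura$-$T_1$ holds exactly when $\aura(y)=\{y\}$ for every $y$.
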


\begin{proof}
$(\Rightarrow)$: Let $y \neq x$. By $\aura$-$T_1$, there exists $U_y \in \taua$ with $y \in U_y$ and $x \notin U_y$. Then $X \setminus \{x\} = \bigcup_{y \neq x} U_y \in \taua$.

$(\Leftarrow)$: Let $x \neq y$. Then $X \setminus \{y\} \in \taua$ contains $x$ but not $y$, and $X \setminus \{x\} \in \taua$ contains $y$ but not $x$.
\end{proof}

\begin{corollary}\label{cor:T1-condition}
$(X, \tau, \aura)$ is $\aura$-$T_1$ if and only if for every $x \neq y$, $y \notin \aura(x)$ or there exists $z \neq x$ with $y \in \aura(z) \subseteq X \setminus \{x\}$.
\end{corollary}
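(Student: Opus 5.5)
The plan is to reduce everything to Theorem~\ref{thm:T1-char}, which says $(X,\tau,\aura)$ is $\aura$-$T_1$ if and only if $X\setminus\{x\}\in\taua$ for every $x\in X$. By Definition~\ref{def:aura-open}, $X\setminus\{x\}$ is $\aura$-open exactly when $\aura(y)\subseteq X\setminus\{x\}$ for every $y\neq x$, that is, when $x\notin\aura(y)$. So the first step is to record the pointwise criterion
\[
(X,\tau,\aura)\ \text{is } \aura\text{-}T_1 \iff x\notin\aura(y)\ \text{for all distinct } x,y\in X .
\]
The rest of the plan compares this criterion with the condition displayed in Corollary~\ref{cor:T1-condition}.

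\emph{Necessity.} Assume $\aura$-$T_1$ and fix $x\neq y$. Take $z=y$. Then $z\neq x$, and $y\in\aura(y)$ by the axiom (\ref{eq:aura-axiom}). The criterion gives $x\notin\aura(y)$, so $\aura(y)\subseteq X\setminus\{x\}$. Hence the second alternative of the corollary always holds, with witness $z=y$. This direction is routine.

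\emph{Sufficiency — the expected obstacle.} Here I would start from the condition and try to reach $x\notin\aura(y)$ for every $x\neq y$. The trouble is that the condition only produces \emph{some} $z\neq x$ with $y\in\aura(z)\subseteq X\setminus\{x\}$, or it holds trivially because $y\notin\aura(x)$. Neither alternative says anything about $\aura(y)$ itself.

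I would first test this on the smallest case. Take $X=\{x,y\}$ with the discrete topology, $\aura(x)=\{x\}$ and $\aura(y)=X$.
\begin{itemize}
\item For the ordered pair $(x,y)$, the first alternative holds, since $y\notin\aura(x)$.
\item For the pair $(y,x)$, the witness $z=x$ works, since $x\in\aura(x)=\{x\}\subseteq X\setminus\{y\}$.
\end{itemize}
So the condition holds. Yet $\{y\}=X\setminus\{x\}$ is not $\aura$-open, because $\aura(y)=X$. So the space is not $\aura$-$T_1$.

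This indicates that the displayed condition, read literally, is necessary but not sufficient. The proof must therefore pin the witness down to $z=y$. Concretely, I would prove the corollary in the corrected form: ``$\aura$-$T_1$ iff for every $x\neq y$, $\aura(y)\subseteq X\setminus\{x\}$,'' equivalently ``$x\notin\aura(y)$.'' Under that reading, both directions follow immediately from the first step. I would also state explicitly, using the two-point example, that the weaker ``there exists $z$'' version only gives a necessary condition.
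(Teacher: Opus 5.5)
Your analysis is correct. The paper gives no proof of this corollary: it is asserted right after Theorem~\ref{thm:T1-char}, as if it were a direct reformulation of it. So there is no argument in the paper to compare against, and your diagnosis stands.

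Each of your steps checks out:
\begin{itemize}
\item Your reduction of Theorem~\ref{thm:T1-char} to the pointwise criterion ``$x\notin\aura(y)$ for all distinct $x,y$'' is exactly right.
\item The necessity direction, with witness $z=y$, is sound.
\item Your two-point space is a genuine counterexample to sufficiency. Since $\{x\}$ and $X$ are open in the discrete topology, $\aura$ is a valid scope function. The displayed condition holds for both ordered pairs. Yet $\taua=\{\emptyset,\{x\},X\}$ has no member containing $y$ and missing $x$.
\end{itemize}
The ``if'' direction of the corollary as printed is therefore false.

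Two small points about your repair. First, what must go is the disjunction, not just the freedom in $z$. If you only set $z=y$ and keep the first alternative, the condition becomes ``$y\notin\aura(x)$ or $x\notin\aura(y)$ for all $x\neq y$''. Your two-point example still satisfies that. Your final corrected form, ``$\aura(y)\subseteq X\setminus\{x\}$ for all $x\neq y$'', does drop the disjunction, so it is fine; just phrase it that way rather than as ``pinning the witness''. Note also:
\begin{itemize}
\item the first alternative alone, quantified over all ordered pairs, is already the exact criterion;
\item replacing ``or'' by ``and'' in the printed statement would also make it true.
\end{itemize}

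Second, the corrected criterion simplifies further. Requiring $x\notin\aura(y)$ for every $x\neq y$ says precisely that $\aura(y)=\{y\}$ for every $y$. Thus $(X,\tau,\aura)$ is $\aura$-$T_1$ if and only if $\aura$ is discrete, which forces $\tau$ to be discrete. In that case $\taua=\powerset(X)$. In particular, $\aura$-$T_1$ and $\aura$-$T_2$ coincide, and this is the cleanest form in which to state the corrected result.
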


\begin{theorem}\label{thm:T0-char}
An $\aura$-space $(X, \tau, \aura)$ is $\aura$-$T_0$ if and only if for every pair of distinct points $x, y \in X$, there exists $z \in X$ such that $\aura(z)$ contains exactly one of $x, y$ and $\aura(z)$ is contained in an $\aura$-open set separating $x$ and $y$.
\end{theorem}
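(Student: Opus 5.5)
Both directions rest on the base-like decomposition of $\aura$-open sets in Proposition~\ref{prop:aura-base}: every $U \in \taua$ equals $\bigcup_{z \in U} \aura(z)$, and $z \in \aura(z) \subseteq U$ for each $z \in U$.

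For the forward implication, fix distinct $x, y$ and use $\aura$-$T_0$ to get $U \in \taua$ containing exactly one of them; by symmetry of the roles, suppose $x \in U$ and $y \notin U$. Take $z = x$. Since $U$ is $\aura$-open, $\aura(x) \subseteq U$, so $y \notin \aura(x)$. Also $x \in \aura(x)$ by (\ref{eq:aura-axiom}). Hence $\aura(z)$ contains exactly one of $x, y$, and it lies inside the $\aura$-open separating set $U$. If instead $y \in U$ and $x \notin U$, take $z = y$ and argue the same way.

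For the converse, suppose $z$ is given with $\aura(z)$ containing exactly one of $x, y$ and $\aura(z) \subseteq W$ for some $\aura$-open $W$ separating $x$ and $y$. Then $W$ itself already witnesses the $\aura$-$T_0$ condition for the pair, so the converse is immediate.

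I expect no step to be a real obstacle. The only point needing care is reading the condition precisely. If ``$\aura$-open set separating $x$ and $y$'' means $W$ contains exactly one of the two points, the converse is immediate as above. Under the weaker reading, where only $\aura(z)$ must separate, the converse fails: the hypothesis $\aura(z) \subseteq W$ does not on its own force $W$ to exclude the other point. So I would fix the reading in which $W$ itself contains exactly one of $x, y$; that makes the equivalence hold.

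Under this reading the characterization is essentially a restatement of the definition. The content it adds is that the separating $\aura$-open set may always be taken to contain the scope $\aura(z)$ of the separated point itself, with $z \in \{x, y\}$.
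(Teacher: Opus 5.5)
Your proof is correct. The paper states Theorem~\ref{thm:T0-char} without proof, so there is no argument of the paper's to compare against. In the forward direction you take $z$ to be the point lying in the separating $\aura$-open set $U$; then $\aura(z)\subseteq U$ excludes the other point. In the converse the set $W$ is itself a witness for $\aura$-$T_0$. These two steps are all that is needed.

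Two small remarks follow.

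First, you do not actually use Proposition~\ref{prop:aura-base}. The forward direction needs only the definition of $\aura$-openness together with (\ref{eq:aura-axiom}). The converse uses neither $z$ nor anything beyond the definition. This confirms your observation that the theorem is a repackaging of the definition.

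Second, your claim that the converse fails under the weak reading is right, and it is easy to exhibit. Under that reading one may always take $W=X$, so the condition reduces to ``some $\aura(z)$ contains exactly one of $x,y$.'' Take $X=\{a,b,c\}$ with the discrete topology and $\aura(a)=\{a,b\}$, $\aura(b)=\{b,c\}$, $\aura(c)=\{c,a\}$. Every pair of points is split by some aura, yet $\taua=\{\emptyset,X\}$, so the space is not $\aura$-$T_0$. In a transitive space the two readings agree, because each $\aura(z)$ is then itself $\aura$-open by Proposition~\ref{prop:transitive-base}.
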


\begin{example}\label{ex:separation-real}
Consider $(\R, \tau_u, \aura_\varepsilon)$ with $\varepsilon > 0$. For distinct $x, y \in \R$ with $|x - y| > 2\varepsilon$, the sets $U = \{z \in \R : \aura_\varepsilon(z) \subseteq (-\infty, \frac{x+y}{2})\}$ and $V = \{z \in \R : \aura_\varepsilon(z) \subseteq (\frac{x+y}{2}, \infty)\}$ are disjoint $\aura$-open sets separating $x$ and $y$. More precisely, $U = (-\infty, \frac{x+y}{2} - \varepsilon)$ and $V = (\frac{x+y}{2} + \varepsilon, \infty)$ are $\aura$-open. Hence $(\R, \tau_u, \aura_\varepsilon)$ separates points that are far enough apart, but not those within distance $2\varepsilon$---separation here depends on the ``granularity'' set by $\varepsilon$.
\end{example}

\begin{definition}\label{def:aura-regular}
An $\aura$-space $(X, \tau, \aura)$ is called \textbf{$\aura$-regular} if for every $\aura$-closed set $F$ and every point $x \notin F$, there exist disjoint $\aura$-open sets $U, V$ such that $x \in U$ and $F \subseteq V$.
\end{definition}

\begin{theorem}\label{thm:discrete-aura-regular}
If $\aura$ is discrete (i.e., $\aura(x) = \{x\}$ for all $x$), then $(X, \tau, \aura)$ is $\aura$-$T_2$ and $\aura$-regular (since $\taua = \powerset(X)$).
\end{theorem}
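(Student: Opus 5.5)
The argument comes down to one observation: when $\aura$ is discrete, every subset of $X$ is $\aura$-open. Everything else follows formally from that.

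\textbf{Step 1: $\taua = \powerset(X)$.} Let $A \subseteq X$ be arbitrary and $x \in A$. Then $\aura(x) = \{x\} \subseteq A$. By Definition~\ref{def:aura-open}, $A$ is $\aura$-open, so $\taua = \powerset(X)$. This also agrees with Proposition~\ref{prop:special-cases}(b), where $\cla(A) = A$ for every $A$. As the remark in Definition~\ref{def:special-auras} notes, a discrete $\aura$ forces $\tau$ to be discrete, since $\{x\} = \aura(x) \in \tau$. This fits with Theorem~\ref{thm:aura-topology}(b), because then $\taua = \tau = \powerset(X)$.

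\textbf{Step 2: $\aura$-$T_2$.} Take distinct points $x \neq y$. Set $U = \{x\}$ and $V = \{y\}$. Both are $\aura$-open by Step~1, they are disjoint, and $x \in U$, $y \in V$. So the space is $\aura$-$T_2$. By Theorem~\ref{thm:T-hierarchy} it is therefore also $\aura$-$T_1$ and $\aura$-$T_0$.

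\textbf{Step 3: $\aura$-regularity.} Take an $\aura$-closed set $F$ and a point $x \notin F$. Every subset is $\aura$-closed, so $F$ can be any set not containing $x$. Set $U = \{x\}$ and $V = F$. Both are $\aura$-open by Step~1, they are disjoint because $x \notin F$, and $x \in U$, $F \subseteq V$. This is exactly Definition~\ref{def:aura-regular}.

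There is no real obstacle here. The only care needed is in Step~1, checking the $\aura$-open condition pointwise using $\aura(x) = \{x\}$. After that, Steps 2 and 3 are just choices of singletons and of $F$ itself inside the discrete topology $\taua$.
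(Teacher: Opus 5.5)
Your proof is correct and takes the same route as the paper. The paper's only justification is the parenthetical $\taua = \powerset(X)$; from it, singletons give $\aura$-$T_2$, and the pair $\{x\}$, $F$ gives $\aura$-regularity. Your Step~1 verifies $\taua = \powerset(X)$ pointwise, which is slightly more complete than the paper: Proposition~\ref{prop:special-cases}(b) asserts this identity but its proof only establishes $\cla(A) = A$.
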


\section{Applications}

In this section we show that the aura framework has natural interpretations in several applied contexts. Section~7.1 uses the aura-closure and aura-interior as rough-set approximation operators; Section~7.2 models wireless sensor coverage; Section~7.3 models epidemic spread.

\subsection{Rough Set Approximation via Aura Functions}

\begin{definition}\label{def:aura-approx}
Let $(X, \tau, \aura)$ be an $\aura$-space and $A \subseteq X$. The \textbf{aura lower approximation} and \textbf{aura upper approximation} of $A$ are defined by:
\begin{align}
    \apr(A) &= \inta(A) = \{x \in A : \aura(x) \subseteq A\}, \label{eq:lower} \\
    \Apr(A) &= \cla(A) = \{x \in X : \aura(x) \cap A \neq \emptyset\}. \label{eq:upper}
\end{align}
The \textbf{aura boundary region} of $A$ is:
\begin{equation}
    \operatorname{bnd}_\aura(A) = \Apr(A) \setminus \apr(A).
\end{equation}
\end{definition}

\begin{theorem}\label{thm:rough-properties}
Let $(X, \tau, \aura)$ be an $\aura$-space and $A, B \subseteq X$. Then:
\begin{enumerate}[label=(\alph*)]
    \item $\apr(A) \subseteq A \subseteq \Apr(A)$.
    \item $\apr(\emptyset) = \Apr(\emptyset) = \emptyset$ and $\apr(X) = \Apr(X) = X$.
    \item $\apr(A \cap B) = \apr(A) \cap \apr(B)$.
    \item $\Apr(A \cup B) = \Apr(A) \cup \Apr(B)$.
    \item $A \subseteq B \implies \apr(A) \subseteq \apr(B)$ and $\Apr(A) \subseteq \Apr(B)$.
    \item $\apr(A^c) = (\Apr(A))^c$ and $\Apr(A^c) = (\apr(A))^c$.
    \item $\apr(\apr(A)) \subseteq \apr(A)$ and $\Apr(A) \subseteq \Apr(\Apr(A))$.
\end{enumerate}
\end{theorem}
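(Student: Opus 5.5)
Since $\apr=\inta$ and $\Apr=\cla$ by Definition~\ref{def:aura-approx}, most items translate directly into properties already established for these operators, and I will reduce each part to Theorem~\ref{thm:cech} or Theorem~\ref{thm:int-properties}.

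For (a), $\apr(A)\subseteq A$ is Theorem~\ref{thm:int-properties}(b) and $A\subseteq\Apr(A)$ is Theorem~\ref{thm:cech}(b). For (b), $\Apr(\emptyset)=\emptyset$ is Theorem~\ref{thm:cech}(a), and $\apr(X)=X$ is Theorem~\ref{thm:int-properties}(a). For $\apr(\emptyset)$, use (a): $\apr(\emptyset)\subseteq\emptyset$. For $\Apr(X)$, use (a): $X\subseteq\Apr(X)\subseteq X$. Parts (c), (d) and (e) are respectively Theorem~\ref{thm:int-properties}(d), Theorem~\ref{thm:cech}(d), and the two monotonicity statements Theorem~\ref{thm:int-properties}(c) and Theorem~\ref{thm:cech}(c).

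The only part requiring care is the duality (f). I will use Theorem~\ref{thm:int-properties}(f) with $A^c$ in place of $A$, which gives $\apr(A^c)=A^c\setminus\cla(A)$. Because $A\subseteq\cla(A)$ by (a), we have $\cla(A)^c\subseteq A^c$, so $A^c\setminus\cla(A)=\cla(A)^c$. This yields $\apr(A^c)=(\Apr(A))^c$. The second identity follows by applying the first to $A^c$ and taking complements: $\apr(A)=(\Apr(A^c))^c$, hence $\Apr(A^c)=(\apr(A))^c$.

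For (g), apply (a) to the set $\apr(A)$ to get $\apr(\apr(A))\subseteq\apr(A)$, and apply (a) to the set $\Apr(A)$ to get $\Apr(A)\subseteq\Apr(\Apr(A))$. Nothing here needs idempotency, which fails in general by Theorem~\ref{thm:not-idempotent}.

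I expect no real obstacle. The one point to watch is in (f): since $\inta$ is defined as a subset of $A$, the reduction of $A^c\setminus\cla(A)$ to $\cla(A)^c$ must be justified explicitly by extensivity.
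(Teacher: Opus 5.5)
Your proof is correct and follows essentially the same route as the paper: (a)--(e) are read off from Theorems~\ref{thm:cech} and~\ref{thm:int-properties}, and the first identity in (f) comes from Theorem~\ref{thm:int-properties}(f) together with extensivity $A \subseteq \cla(A)$. The only minor difference is that you obtain $\Apr(A^c) = (\apr(A))^c$ by substituting $A^c$ into the first identity, whereas the paper computes $\cla(A^c) = \{x : \aura(x) \not\subseteq A\}$ directly. Both are fine, and you also make explicit $\apr(\emptyset)=\emptyset$ and $\Apr(X)=X$, which the paper leaves implicit.
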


\begin{proof}
Parts (a)--(e) follow from Theorems \ref{thm:cech} and \ref{thm:int-properties}. Part (f) follows from Theorem \ref{thm:int-properties}(f): $\inta(A) = A \setminus \cla(A^c)$, so $\inta(A^c) = A^c \setminus \cla(A) = (\cla(A))^c$ (since $A^c \setminus \cla(A) = A^c \cap (\cla(A))^c = (\cla(A))^c$ when $A \subseteq \cla(A)$... let us verify: $\inta(A^c) = \{x \in A^c : \aura(x) \subseteq A^c\} = \{x \in A^c : \aura(x) \cap A = \emptyset\} = A^c \setminus \cla(A) \cup (A^c \cap (\cla(A))^c)$. Since $\cla(A) \supseteq A$, we have $A^c \cap (\cla(A))^c = (\cla(A))^c$. So $\inta(A^c) = (\cla(A))^c = (\Apr(A))^c$. Similarly, $\cla(A^c) = \{x : \aura(x) \cap A^c \neq \emptyset\} = \{x : \aura(x) \not\subseteq A\} = X \setminus \inta(A) = (\apr(A))^c$.

Part (g): $\apr(\apr(A)) = \{x \in \apr(A) : \aura(x) \subseteq \apr(A)\}$. Since $\apr(A) \subseteq A$, if $\aura(x) \subseteq \apr(A) \subseteq A$, then $x \in \apr(A)$. So $\apr(\apr(A)) \subseteq \apr(A)$. For the upper: $\Apr(A) \subseteq \Apr(\Apr(A))$ follows from $A \subseteq \Apr(A)$ and monotonicity.
\end{proof}

\begin{remark}\label{rem:pawlak-generalization}
In Pawlak's model, $\aura(x) = [x]_R$ (the equivalence class). Our model generalizes this in three ways:
\begin{enumerate}[label=(\roman*)]
    \item No equivalence relation is needed; $\aura(x)$ can be any open set.
    \item The auras can have different sizes and shapes for different points.
    \item The topological structure of $\tau$ constrains the auras, providing a natural geometric framework.
\end{enumerate}
When $\aura$ is symmetric and transitive, the model reduces to a partition-based approximation similar to Pawlak's.
\end{remark}

\begin{definition}\label{def:rough-measures}
Let $(X, \tau, \aura)$ be an $\aura$-space with $X$ finite and $A \subseteq X$ with $\Apr(A) \neq \emptyset$. The \textbf{aura accuracy measure} and \textbf{aura roughness measure} of $A$ are:
\begin{equation}
    \rho_\aura(A) = \frac{|\apr(A)|}{|\Apr(A)|}, \quad \sigma_\aura(A) = 1 - \rho_\aura(A).
\end{equation}
$A$ is called \textbf{$\aura$-definable} (or \textbf{$\aura$-exact}) if $\apr(A) = \Apr(A)$ (equivalently, $\rho_\aura(A) = 1$).
\end{definition}

\subsection{Application: Medical Decision Making}

We illustrate the applicability of aura-based rough sets with a medical decision-making example.

\begin{example}\label{ex:medical}
Consider a set of patients $U = \{p_1, p_2, p_3, p_4, p_5, p_6\}$ evaluated by a medical system with symptoms and diagnostic data. Suppose a topology $\tau$ is induced on $U$ by the diagnostic similarity, and an expert assigns each patient an aura representing the group of patients considered ``diagnostically similar'' based on a specific clinical criterion:

\begin{center}
\begin{tabular}{c|c|c|c}
\hline
Patient & Symptoms & Diagnosis & $\aura(p_i)$ \\
\hline
$p_1$ & Fever, Cough & Flu & $\{p_1, p_2\}$ \\
$p_2$ & Fever, Fatigue & Flu & $\{p_1, p_2, p_3\}$ \\
$p_3$ & Fatigue, Headache & Uncertain & $\{p_2, p_3\}$ \\
$p_4$ & Cough, Chest pain & Pneumonia & $\{p_4, p_5\}$ \\
$p_5$ & Chest pain, Fever & Pneumonia & $\{p_4, p_5, p_6\}$ \\
$p_6$ & Fever, Fatigue & Uncertain & $\{p_5, p_6\}$ \\
\hline
\end{tabular}
\end{center}

Let the topology $\tau$ on $U$ be generated by the subbasis $\{\aura(p_i) : 1 \leq i \leq 6\}$. Consider the target set $A = \{p_1, p_2, p_4, p_5\}$ (patients diagnosed with a definite disease: Flu or Pneumonia).

\textbf{Aura lower approximation:}
\begin{align*}
\apr(A) &= \{p_i \in A : \aura(p_i) \subseteq A\} \\
&= \{p_1, p_4\},
\end{align*}
since $\aura(p_1) = \{p_1, p_2\} \subseteq A$, $\aura(p_2) = \{p_1, p_2, p_3\} \not\subseteq A$ (because $p_3 \notin A$), $\aura(p_4) = \{p_4, p_5\} \subseteq A$, and $\aura(p_5) = \{p_4, p_5, p_6\} \not\subseteq A$ (because $p_6 \notin A$).

\textbf{Aura upper approximation:}
\begin{align*}
\Apr(A) &= \{p_i \in U : \aura(p_i) \cap A \neq \emptyset\} \\
&= \{p_1, p_2, p_3, p_4, p_5, p_6\} = U,
\end{align*}
since every patient's aura intersects $A$.

\textbf{Boundary region:}
\[
\operatorname{bnd}_\aura(A) = U \setminus \{p_1, p_4\} = \{p_2, p_3, p_5, p_6\}.
\]

\textbf{Accuracy:} $\rho_\aura(A) = \frac{2}{6} = 0.333$.

\textbf{Interpretation:} Patients $p_1$ and $p_4$ are \emph{certainly} in the diagnosed group (all their similar patients are also diagnosed). Patients $p_2, p_3, p_5, p_6$ are in the boundary---their diagnostic status is uncertain because their aura overlaps with undiagnosed patients. The low accuracy reflects the ambiguity in the diagnostic data.

Now, if we refine the aura by using a more specific clinical criterion (e.g., $\aura'(p_2) = \{p_1, p_2\}$ and $\aura'(p_5) = \{p_4, p_5\}$, keeping others the same), then $\underline{\operatorname{apr}}_{\aura'}(A) = \{p_1, p_2, p_4, p_5\} = A$, giving $\rho_{\aura'}(A) = 1$. This demonstrates that refining the aura function improves diagnostic certainty.
\end{example}

\begin{theorem}\label{thm:refine}
Let $(X, \tau, \aura_1)$ and $(X, \tau, \aura_2)$ be two $\aura$-spaces on the same topological space. If $\aura_2(x) \subseteq \aura_1(x)$ for every $x \in X$ (i.e., $\aura_2$ is a \emph{refinement} of $\aura_1$), then for every $A \subseteq X$:
\begin{enumerate}[label=(\alph*)]
    \item $\underline{\operatorname{apr}}_{\aura_1}(A) \subseteq \underline{\operatorname{apr}}_{\aura_2}(A)$ (lower approximation increases);
    \item $\overline{\operatorname{apr}}_{\aura_2}(A) \subseteq \overline{\operatorname{apr}}_{\aura_1}(A)$ (upper approximation decreases);
    \item $|\operatorname{bnd}_{\aura_2}(A)| \leq |\operatorname{bnd}_{\aura_1}(A)|$ when $X$ is finite (boundary shrinks).
\end{enumerate}
\end{theorem}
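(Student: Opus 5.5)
The plan is a direct elementwise comparison using the definitions in Definition~\ref{def:aura-approx}, namely $\underline{\operatorname{apr}}_{\aura_i}=\operatorname{int}_{\aura_i}$ and $\overline{\operatorname{apr}}_{\aura_i}=\operatorname{cl}_{\aura_i}$, together with the hypothesis $\aura_2(x)\subseteq\aura_1(x)$ for all $x\in X$.

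For (a), take $x\in\underline{\operatorname{apr}}_{\aura_1}(A)$. Then $x\in A$ and $\aura_1(x)\subseteq A$. Since $\aura_2(x)\subseteq\aura_1(x)$, we get $\aura_2(x)\subseteq A$, so $x\in\underline{\operatorname{apr}}_{\aura_2}(A)$.

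For (b), take $x\in\overline{\operatorname{apr}}_{\aura_2}(A)$. Some $y$ lies in $\aura_2(x)\cap A$, hence in $\aura_1(x)\cap A$, so $x\in\overline{\operatorname{apr}}_{\aura_1}(A)$. Alternatively, (b) follows from (a) by the duality $\overline{\operatorname{apr}}_{\aura}(A)=(\underline{\operatorname{apr}}_{\aura}(A^c))^c$ of Theorem~\ref{thm:rough-properties}(f), applied to $A^c$ for both scope functions and then complemented. I will give the direct argument and mention the duality as a check.

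For (c), I first prove the stronger set inclusion $\operatorname{bnd}_{\aura_2}(A)\subseteq\operatorname{bnd}_{\aura_1}(A)$, which holds with no finiteness assumption. Write
\[
\operatorname{bnd}_{\aura_2}(A)=\overline{\operatorname{apr}}_{\aura_2}(A)\cap\bigl(\underline{\operatorname{apr}}_{\aura_2}(A)\bigr)^c .
\]
By (b), $\overline{\operatorname{apr}}_{\aura_2}(A)\subseteq\overline{\operatorname{apr}}_{\aura_1}(A)$. By (a), after complementing, $(\underline{\operatorname{apr}}_{\aura_2}(A))^c\subseteq(\underline{\operatorname{apr}}_{\aura_1}(A))^c$. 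Intersecting these two inclusions gives $\operatorname{bnd}_{\aura_2}(A)\subseteq\operatorname{bnd}_{\aura_1}(A)$. When $X$ is finite, monotonicity of cardinality under inclusion then yields $|\operatorname{bnd}_{\aura_2}(A)|\le|\operatorname{bnd}_{\aura_1}(A)|$.

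No step is a genuine obstacle. The only point needing care is (c): one must not argue with cardinalities of differences directly, but instead reduce to the set inclusion, which holds because the upper approximation shrinks while the lower one grows. As a side remark, when $\overline{\operatorname{apr}}_{\aura_2}(A)\neq\emptyset$ the same inclusions give $\rho_{\aura_2}(A)\ge\rho_{\aura_1}(A)$, which matches the refinement phenomenon in Example~\ref{ex:medical}.
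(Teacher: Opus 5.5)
Your proposal is correct and follows the paper's proof essentially step for step: elementwise arguments for (a) and (b) from $\aura_2(x)\subseteq\aura_1(x)$, and for (c) the set inclusion $\operatorname{bnd}_{\aura_2}(A)\subseteq\operatorname{bnd}_{\aura_1}(A)$ obtained from (a) and (b), then monotonicity of cardinality. The duality check via Theorem~\ref{thm:rough-properties}(f) and the accuracy remark are valid extras that the paper's proof does not include.
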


\begin{proof}
\begin{enumerate}[label=(\alph*)]
    \item If $x \in \underline{\operatorname{apr}}_{\aura_1}(A)$, then $\aura_1(x) \subseteq A$. Since $\aura_2(x) \subseteq \aura_1(x) \subseteq A$, $x \in \underline{\operatorname{apr}}_{\aura_2}(A)$.
    \item If $x \in \overline{\operatorname{apr}}_{\aura_2}(A)$, then $\aura_2(x) \cap A \neq \emptyset$. Since $\aura_2(x) \subseteq \aura_1(x)$, $\aura_1(x) \cap A \neq \emptyset$, so $x \in \overline{\operatorname{apr}}_{\aura_1}(A)$.
    \item Follows from (a) and (b): $\operatorname{bnd}_{\aura_2}(A) = \overline{\operatorname{apr}}_{\aura_2}(A) \setminus \underline{\operatorname{apr}}_{\aura_2}(A) \subseteq \overline{\operatorname{apr}}_{\aura_1}(A) \setminus \underline{\operatorname{apr}}_{\aura_1}(A) = \operatorname{bnd}_{\aura_1}(A)$. \qedhere
\end{enumerate}
\end{proof}

\subsection{Wireless Sensor Network Coverage}\label{sec:sensor}

Consider a region $X \subseteq \R^2$ equipped with the usual topology $\tau_u$, and suppose that a finite set of sensors $S = \{s_1, \ldots, s_n\} \subset X$ is deployed. Each sensor $s_i$ has a detection range $r_i > 0$, so it monitors the open disk $B(s_i, r_i) = \{y \in X : d(s_i, y) < r_i\}$. For an arbitrary point $x \in X$ we define $\aura(x)$ as the detection zone of the sensor that is ``responsible'' for $x$: if $x = s_i$ we set $\aura(s_i) = B(s_i, r_i)$; if $x$ is not a sensor we set $\aura(x) = B(x, \delta)$ for some small fixed $\delta > 0$ (say, a local measurement radius). Then $\aura: X \to \tau_u$ is a scope function and $(X, \tau_u, \aura)$ is an aura space. The aura topology $\taua$ encodes the effective coverage geometry.

\begin{example}[Coverage analysis]\label{ex:sensor}
Let $X = \R^2$, and place three sensors with positions and ranges
\[
s_1 = (0,0),\; r_1 = 3;\qquad s_2 = (4,0),\; r_2 = 2;\qquad s_3 = (2,3),\; r_3 = 2.
\]
Assign each point its nearest sensor's disk as its aura (ties broken arbitrarily). Consider the target region $A = [1,3] \times [0,2]$.

\textbf{Lower approximation.}
$\apr(A) = \{x \in A : \aura(x) \subseteq A\}$ consists of points whose entire sensor disk lies inside $A$. Since all sensor radii are at least $2$, no sensor disk fits entirely inside the $2 \times 2$ box, so $\apr(A) = \emptyset$.

\textbf{Upper approximation.}
$\Apr(A) = \{x \in X : \aura(x) \cap A \neq \emptyset\}$ consists of points whose sensor disk touches $A$. All three sensor disks overlap $A$, so $\Apr(A)$ contains a large neighbourhood of $A$.

\textbf{Interpretation.}
$\apr(A) = \emptyset$ tells us that no sensor can guarantee full surveillance of $A$ on its own. $\Apr(A) \supsetneq A$ tells us that the sensor network detects any event happening in $A$, but may also generate false positives from outside $A$. The boundary $\operatorname{bnd}_\aura(A)$ quantifies the zone of surveillance uncertainty.
\end{example}

The iterative aura-closure has a direct meaning here. If a sensor at $x$ detects an event, every point in $\aura(x)$ is alerted. If sensors relay information, the second closure $\cla^2(A)$ gives the set of points alerted after one relay step, $\cla^3(A)$ after two relay steps, and so on. The stabilisation $\cla^\infty(A)$ is the set of all points that can eventually be reached by relay from $A$.

\begin{proposition}\label{prop:full-coverage}
Let $(X, \tau, \aura)$ be an aura space with $X$ finite. The sensor network provides \textbf{full coverage} of a target set $A$ (in the sense that $\apr(A) = A$) if and only if $A$ is $\aura$-open.
\end{proposition}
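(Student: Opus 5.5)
The claim is essentially definitional, so the plan is to unwind both sides and compare them; finiteness of $X$ plays no role. By Definition \ref{def:aura-approx}, $\apr(A)=\inta(A)=\{x\in A:\aura(x)\subseteq A\}$. By Definition \ref{def:aura-open}, $A$ is $\aura$-open exactly when every $x\in A$ satisfies $\aura(x)\subseteq A$, equivalently $\inta(A)=A$. The proof therefore reduces to the identity $\apr(A)=\inta(A)$ together with this equivalent form of $\aura$-openness.

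For the forward direction, assume $\apr(A)=A$. Then every $x\in A$ lies in $\inta(A)$, so $\aura(x)\subseteq A$, and $A\in\taua$.

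For the converse, assume $A\in\taua$. Theorem \ref{thm:int-properties}(b) gives $\apr(A)=\inta(A)\subseteq A$. For the reverse inclusion, each $x\in A$ has $\aura(x)\subseteq A$ by $\aura$-openness, so $x\in\inta(A)$. Hence $\apr(A)=A$.

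There is no real obstacle in the argument. The one point that deserves a remark is that the hypothesis ``$X$ finite'' is unnecessary: it only motivates the sensor interpretation and the measures in Definition \ref{def:rough-measures}. I would also add a sentence connecting the result to the coverage reading: $\aura$-openness means every point of $A$ has its whole detection zone inside $A$. By Proposition \ref{prop:aura-base}, this is the same as saying $A=\bigcup_{x\in A}\aura(x)$, so $A$ is exactly tiled by the scopes of its own points.
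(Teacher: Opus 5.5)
Your proof is correct and takes the same approach as the paper, which likewise just notes that $\apr(A)=\inta(A)=A$ is, by definition, the statement that $A$ is $\aura$-open. Your side remark that the finiteness of $X$ is never used is also correct.
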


\begin{proof}
$\apr(A) = \inta(A) = A$ if and only if $A$ is $\aura$-open, by definition.
\end{proof}

\begin{remark}\label{rem:coverage-refinement}
By Theorem~\ref{thm:refine}, deploying more sensors (which effectively refines $\aura$) increases the lower approximation and decreases the boundary. This gives a topological criterion for sensor placement optimisation: add sensors until the target set becomes $\aura$-open.
\end{remark}

\subsection{Modelling Epidemic Spread}\label{sec:epidemic}

Let $X$ be a population (finite or countable) and let $\tau$ be a topology on $X$ generated by social or geographical proximity. For each individual $x \in X$ define $\aura(x) \in \tau$ to be the set of individuals whom $x$ can directly infect---the \emph{transmission neighbourhood} of $x$. The axiom $x \in \aura(x)$ is natural: an infected person is in contact with himself. The triple $(X, \tau, \aura)$ models the epidemic environment.

\begin{example}[Spread dynamics on a small population]\label{ex:epidemic}
Take $X = \{a, b, c, d, e, f, g\}$ with $\tau = \powerset(X)$ (so that any subset can serve as an aura), and define the transmission neighbourhoods
\[
\aura(a) = \{a, b\}, \quad \aura(b) = \{b, c, d\}, \quad \aura(c) = \{c\}, \quad \aura(d) = \{d, e\},
\]
\[
\aura(e) = \{e, f\}, \quad \aura(f) = \{f\}, \quad \aura(g) = \{g\}.
\]
Suppose the initial set of infected individuals is $A_0 = \{a\}$.

\textbf{Step 1.} $\cla(A_0) = \{x : \aura(x) \cap \{a\} \neq \emptyset\} = \{a\}$, since only $\aura(a)$ contains $a$. So after one step, only $a$ is infected. But $a$ can transmit to $\aura(a) = \{a, b\}$. To model forward spread, we use
\[
A_1 = \bigcup_{x \in A_0} \aura(x) = \aura(a) = \{a, b\}.
\]

\textbf{Step 2.} $A_2 = \bigcup_{x \in A_1} \aura(x) = \aura(a) \cup \aura(b) = \{a, b\} \cup \{b, c, d\} = \{a, b, c, d\}$.

\textbf{Step 3.} $A_3 = A_2 \cup \aura(c) \cup \aura(d) = \{a, b, c, d\} \cup \{c\} \cup \{d, e\} = \{a, b, c, d, e\}$.

\textbf{Step 4.} $A_4 = A_3 \cup \aura(e) = \{a, b, c, d, e, f\}$.

\textbf{Step 5.} $A_5 = A_4 \cup \aura(f) = A_4$. Stabilisation reached.

The set $A_\infty = \{a, b, c, d, e, f\}$ is the ``epidemic reach'' of $a$. Individual $g$ is never infected because no transmission chain connects $a$ to $g$. The number of steps to stabilisation ($n = 4$) measures the depth of the infection chain.
\end{example}

We formalise the forward spread operator as follows.

\begin{definition}\label{def:spread}
Let $(X, \tau, \aura)$ be an aura space. The \textbf{spread operator} is defined by
\[
S_\aura(A) = \bigcup_{x \in A} \aura(x)
\]
for $A \subseteq X$. The \textbf{$n$-step spread} is $S_\aura^n(A)$ (iterate $n$ times), and the \textbf{total spread} is $S_\aura^\infty(A) = \bigcup_{n \geq 0} S_\aura^n(A)$.
\end{definition}

\begin{proposition}\label{prop:spread}
Let $(X, \tau, \aura)$ be an aura space and $A \subseteq X$. Then:
\begin{enumerate}[label=(\alph*)]
    \item $A \subseteq S_\aura(A) \subseteq \cla(S_\aura(A))$.
    \item If $\aura$ is transitive, then $S_\aura(A)$ is $\aura$-open for every $A$.
    \item $S_\aura^\infty(A) = S_\aura^\infty(\{x\})$ for some $x \in A$ if and only if $A$ is contained in a single $\aura$-connected component (when $\aura$ is transitive).
\end{enumerate}
\end{proposition}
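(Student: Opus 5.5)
Parts (a) and (b) follow directly from the axiom \eqref{eq:aura-axiom} and the definitions. For (a), each $x \in A$ satisfies $x \in \aura(x) \subseteq S_\aura(A)$, so $A \subseteq S_\aura(A)$. The second inclusion $S_\aura(A) \subseteq \cla(S_\aura(A))$ is extensivity of $\cla$, i.e.\ Theorem~\ref{thm:cech}(b) applied to the set $S_\aura(A)$.

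For (b), take $y \in S_\aura(A)$ and choose $x \in A$ with $y \in \aura(x)$. Transitivity (Definition~\ref{def:transitive}) gives $\aura(y) \subseteq \aura(x) \subseteq S_\aura(A)$. This is exactly the condition of Definition~\ref{def:aura-open}, so $S_\aura(A) \in \taua$. As a by-product, since $\aura(y) \subseteq S_\aura(A)$ for every $y \in S_\aura(A)$, we get $S_\aura(S_\aura(A)) = S_\aura(A)$. Hence under transitivity the iteration stabilises after one step: $S_\aura^n(A) = S_\aura(A)$ for all $n \ge 1$, and $S_\aura^\infty(A) = S_\aura(A) = \bigcup_{x\in A}\aura(x)$.

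For (c) I would use this collapse to reduce both sides to statements about single auras. With transitivity, $S_\aura^\infty(\{x\}) = \aura(x)$, so the left-hand condition becomes $\bigcup_{y\in A}\aura(y) = \aura(x)$ for some $x \in A$.
\begin{itemize}
\item If $A \subseteq \aura(x)$, then transitivity gives $\aura(y) \subseteq \aura(x)$ for every $y \in A$. Together with $\aura(x) \subseteq S_\aura(A)$, this yields equality.
\item Conversely, if $S_\aura(A) = \aura(x)$, then $A \subseteq S_\aura(A) = \aura(x)$ by (a).
\end{itemize}
So the condition is equivalent to $A \subseteq \aura(x)$ for some $x \in A$. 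In words, $A$ lies inside the spread region $S_\aura^\infty(\{x\})$ of one of its own points.

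The main obstacle is that the paper has not defined ``$\aura$-connected component''. I would fix the definition so that the component generated by $x$ is $S_\aura^\infty(\{x\}) = \aura(x)$, the set of points reachable from $x$ by transmission chains. ``$A$ is contained in a single $\aura$-connected component'' would then mean $A \subseteq S_\aura^\infty(\{x\})$ for some $x\in A$. With that definition, the equivalence above is the whole proof. If instead one means connected components of the topology $\taua$, the statement can fail: two points sharing the same aura, but with distinct ones elsewhere, need not align with $\taua$-connectedness. So I would state the reachability-based definition explicitly before proving (c).
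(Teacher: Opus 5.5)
Your arguments for (a) and (b) match the paper's. For (b) the paper detours through $z\in\aura(y)\Rightarrow\aura(z)\subseteq\aura(y)\subseteq\aura(x_0)$, while you use $\aura(y)\subseteq\aura(x_0)$ directly.

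For (c) you take a genuinely different route, and yours is the sound one. The paper reads ``$\aura$-connected component'' as a component of $(X,\taua)$. It argues that, under transitivity, $S_\aura^\infty(\{x\})$ is the component of $x$ because it is $\aura$-open and its complement is also $\aura$-open, and then says ``the claim follows.'' The complement claim is false, and clopenness alone would not make the set a component anyway. As you suspected, (c) fails under this reading. However, your one-line description of the failure is not a counterexample, so here is one. Take $X=\{a,b,c\}$, $\tau=\powerset(X)$, $\aura(a)=\{a,c\}$, $\aura(b)=\{b,c\}$, $\aura(c)=\{c\}$. This $\aura$ is transitive, and $\taua=\{\emptyset,\{c\},\{a,c\},\{b,c\},X\}$ has no proper nonempty clopen set. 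So $A=\{a,b\}$ lies in the single $\taua$-component $X$. Yet $S_\aura^\infty(A)=X$, whereas $S_\aura^\infty(\{a\})=\{a,c\}$ and $S_\aura^\infty(\{b\})=\{b,c\}$. The complement $\{b\}$ of $S_\aura^\infty(\{a\})$ is not $\aura$-open, which is exactly where the paper's argument breaks.

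Your collapse $S_\aura^\infty(A)=S_\aura(A)$ gives an exact, definition-free characterisation of the left-hand side: $A\subseteq\aura(x)$ for some $x\in A$. It also recovers the half of the paper's claim that is true. Under transitivity $\aura(x)$ is $\aura$-open, so any splitting of it into two disjoint relatively open pieces consists of $\aura$-open sets. The piece containing $x$ must contain all of $\aura(x)$. Hence $\aura(x)$ is $\taua$-connected, and the left-hand side implies that $A$ lies in one $\taua$-component.

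Be aware, though, that your proposed definition of ``component'' is reverse-engineered. The sets $\aura(x)$ can overlap, as in the example above, so they do not partition $X$. The quantifier ``for some $x\in A$'' rather than ``$x\in X$'' is exactly what makes the equivalence hold. With $x\in X$ it fails: on the discrete three-point space take $\aura(c)=X$, $\aura(a)=\{a\}$, $\aura(b)=\{b\}$ and $A=\{a,b\}$. Under your definition, (c) is just a restatement of your reduction. It is cleaner to present (c) as the characterisation ``$A\subseteq\aura(x)$ for some $x\in A$,'' together with the one-directional implication to $\taua$-components, rather than as an equivalence about components.
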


\begin{proof}
(a) For any $x \in A$, $x \in \aura(x) \subseteq S_\aura(A)$, so $A \subseteq S_\aura(A)$. For any $y \in S_\aura(A)$, $y \in \aura(x_0)$ for some $x_0 \in A$, so $\aura(y) \cap S_\aura(A) \supseteq \{y\} \neq \emptyset$ and $y \in \cla(S_\aura(A))$.

(b) If $\aura$ is transitive and $y \in S_\aura(A)$, then $y \in \aura(x_0)$ for some $x_0 \in A$. For any $z \in \aura(y)$, transitivity gives $\aura(z) \subseteq \aura(y) \subseteq \aura(x_0)$, so $z \in S_\aura(A)$. Hence $\aura(y) \subseteq S_\aura(A)$ and $S_\aura(A)$ is $\aura$-open.

(c) In a transitive aura space, $S_\aura^\infty(\{x\})$ is the $\aura$-connected component of $x$ (the maximal $\aura$-connected set containing $x$), since $S_\aura^\infty(\{x\})$ is $\aura$-open by (b) and its complement is also $\aura$-open. The claim follows.
\end{proof}

\begin{remark}\label{rem:epidemic-control}
The epidemic model suggests natural control strategies in terms of the aura function:
\begin{enumerate}[label=(\roman*)]
    \item \textbf{Quarantine} corresponds to replacing $\aura(x)$ by $\{x\}$ for infected individuals, which cuts the transmission chain.
    \item \textbf{Social distancing} corresponds to refining $\aura$ globally---replacing each $\aura(x)$ by a smaller open set $\aura'(x) \subseteq \aura(x)$. By Theorem~\ref{thm:refine}, this shrinks the upper approximation and the boundary.
    \item \textbf{Herd immunity} can be interpreted as making the set of immune individuals $\aura$-open: once $I \subseteq X$ is $\aura$-open, the spread operator satisfies $S_\aura(X \setminus I) \cap I = \emptyset$ for transitive $\aura$.
\end{enumerate}
\end{remark}

\section{Conclusion}

We have introduced the aura topological space $(X, \tau, \aura)$, a structure obtained by attaching to each point $x$ a fixed open neighborhood $\aura(x)$. This simple axiom---a single function $\aura: X \to \tau$ with $x \in \aura(x)$---turned out to be surprisingly productive.

On the operator side, the aura-closure $\cla$ is an additive \v{C}ech closure operator whose failure of idempotency leads naturally to the transfinite iteration $\cla^\infty$ and a Kuratowski closure. The two resulting topologies sit in the chain $\taua^\infty \subseteq \taua \subseteq \tau$, and the gap between them collapses precisely when $\aura$ is transitive.

On the open-set side, combining $\cla$ with the standard interior produces five generalized open-set classes ($\aura$-semi-open, $\aura$-pre-open, $\aura$-$\alpha$-open, $\aura$-$\beta$-open, $\aura$-$b$-open) whose hierarchy we determined completely, separating all non-coinciding classes by explicit examples on finite sets and on $\R$. The corresponding continuity notions admit decomposition theorems that extend the classical results of Levine and Mashhour. The separation axioms $\aura$-$T_0$, $\aura$-$T_1$, $\aura$-$T_2$ depend on the choice of the scope function, so that the same underlying space can exhibit different separation behaviour for different aura functions.

On the applied side, the aura framework lends itself to at least three types of problems: rough set approximation without equivalence relations (where refining the aura function improves accuracy---Theorem~\ref{thm:refine}), wireless sensor coverage analysis (where full coverage of a target set is equivalent to the set being $\aura$-open---Proposition~\ref{prop:full-coverage}), and epidemic spread modelling (where the spread operator $S_\aura$ tracks infection chains and standard interventions such as quarantine and social distancing correspond to modifying the scope function---Remark~\ref{rem:epidemic-control}). These examples suggest that the aura space may serve as a unifying framework for problems involving ``local range'' or ``local scope'' data.

Several questions are left for future work. The covering properties (compactness, Lindel\"of) and connectivity theory for aura spaces deserve a systematic treatment; the same holds for product and subspace constructions. Combining the aura function with an ideal opens the way to a hybrid local function and a topology that interpolates between $\taua$ and $\tau^*$. On the applied side, more detailed models for sensor deployment optimisation, epidemic threshold analysis, and digital image segmentation are worth exploring.

\section*{Conflict of Interest}

The author declares no conflict of interest.

\section*{Data Availability}

No data was used for the research described in the article.

\end{document}